\newtheorem{theorem}{Theorem}[section]
\newtheorem{lemma}[theorem]{Lemma}
\theoremstyle{definition}
\newtheorem{definition}[theorem]{Definition}
\newtheorem{example}[theorem]{Example}
\theoremstyle{proposition}
\newtheorem{proposition}[theorem]{Proposition}
\theoremstyle{remark}
\newtheorem{remark}[theorem]{Remark}
\theoremstyle{corollary}
\newtheorem{corollary}[theorem]{Corollary}
\theoremstyle{question}
\newtheorem{question}[theorem]{Question}
\newcommand{\field}[1]{\mathbb{#1}}
\newcommand{\C}{\field{C}}
\newcommand{\R}{\field{R}}
\newcommand{\N}{\field{N}}
\newcommand{\Z}{\field{Z}}
\newcommand{\Q}{\field{Q}}
\newcommand{\ham}{(M,\omega,\psi)}
\newcommand{\tor}{\mathcal{S}}
\title{\textsc{Hamiltonian \(S^1\)-spaces with large equivariant pseudo-index}}
\author{Isabelle Charton}
\date{March 2019}
\begin{document}

\maketitle
	
	\begin{abstract}
	Let \((M,\omega)\) be a compact symplectic manifold of dimension \(2n\) endowed with a Hamiltonian circle action with only isolated fixed points. Whenever \(M\) admits a toric \(1\)-skeleton \(\mathcal{S}\), which is a special collection of embedded \(2\)-spheres in \(M\), we define the notion of equivariant pseudo-index of \(\mathcal{S}\): this is the minimum of the evaluation of the first Chern class \(c_1\) on the spheres of \(\mathcal{S}\). This can be seen as the analog in this category of the notion of pseudo-index for complex Fano varieties.\\
	In this paper we provide upper bounds for the equivariant pseudo-index. In particular, when the even Betti numbers of \(M\) are unimodal, we prove that it is at most \(n+1\) . Moreover, when it is exactly \(n+1\), \(M\) must be homotopically equivalent to \(\C P^n\).
	\end{abstract}

\footnotetext{
    2010 Mathematics Subjects Classification. 57R91, 57S25, 37J10\\
    Keywords and phrase. Hamiltonian circle actions, Fixed points, Equivariant Cohomology}

\tableofcontents

\begin{section}{Introduction}
	
	In the field of algebraic geometry  there are  various known characterizations of the projective space. Let \(X\) be a smooth complex projective variety of complex dimension \(n\). The variety  \(X\) is called Fano, if its anticanonical line bundle \(-\mathcal{K}_X\) is ample. The  pseudo-index of \(X\) is defined as
	\begin{align*}
	\iota_X= \min \left\lbrace c_1 (-\cal K_{\text{X}})[C]  \mid C \subset \text{X} \text{ is a rational curve}\right\rbrace ,
	\end{align*}
	where \(c_1 (-\cal K_{\text{X}})\) is the first Chern class of \(-\cal K_{\text{X}}\).
	It is known that the pseudo-index \(\iota_{X}\) is less or equal to \(n+1\) (see \cite{Mori}). It is natural to ask whether one can characterize a smooth Fano variety with maximal pseudo-index. The following was a  conjecture by Mori  and it was proven  by Cho, Miyaoka and Shepherd-Barron.
\begin{theorem}(\cite[Corollary 0.3]{pseudo})
		Let \(X\) be a smooth complex  projective Fano variety of complex dimension \(n\). If the pseudo-index \(	\iota_X  \) is equal to \(n+1\), then  \(X\) is isomorphic to \(\C P^n\).
	\end{theorem}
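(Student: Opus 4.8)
The plan is to deploy Mori's theory of rational curves and to reduce the statement to the classical Kobayashi--Ochiai characterization of projective space by the Fano index. The starting observation is that, by Mori's bend-and-break, a Fano manifold is covered by rational curves of anticanonical degree at most $n+1$; since $\iota_X = n+1$ forces every rational curve to have anticanonical degree at least $n+1$, through every point there passes a rational curve $C$ of minimal degree $-K_X \cdot C = n+1$.

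First I would show that $X$ has Picard number one. Suppose not; then $X$ admits an extremal contraction $\mathrm{cont}_R : X \to Y$ with $\dim Y \ge 1$, whose length $\ell(R) = \min\{-K_X\cdot C : [C]\in R\}$ satisfies $\ell(R) \ge \iota_X = n+1$. Applying Wi\'sniewski's inequality $\dim E + \dim F \ge \dim X + \ell(R) - 1$, where $E$ is the contracted locus and $F$ a nontrivial fiber, gives $\dim E + \dim F \ge 2n$; since $\dim F < \dim E \le n$ when $\dim Y \ge 1$, this is impossible. Hence $\rho(X)=1$ and we may write $-K_X = r_X H$ with $H$ the ample primitive generator of $\mathrm{Pic}(X)$ and $r_X$ the Fano index.

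Next I would aim to identify $r_X$ with $\iota_X$. Since $-K_X = r_X H$, minimizing $-K_X\cdot C$ over rational curves is the same as minimizing the $H$-degree, so the minimal curves above have $H\cdot C = (n+1)/r_X$, forcing $r_X \mid n+1$. To pin down $H\cdot C = 1$ (equivalently $r_X = n+1$) I would analyze the minimal rational curves through a general point $x$ infinitesimally: by the deformation theory of such free curves, $f^*T_X$ splits as $\mathcal{O}(2)\oplus \mathcal{O}(1)^{\oplus(n-1)}$ (one summand $\mathcal{O}(2)$ coming from the tangent to $C$, the remaining summands positive by minimality, with total degree $2+(n-1)=n+1$). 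Consequently the space of minimal curves through $x$ has dimension $n-1$, and the tangent map sends it finitely onto a subvariety $\mathcal{C}_x \subseteq \mathbb{P}(T_xX)$ of dimension $n-1$; hence $\mathcal{C}_x = \mathbb{P}(T_xX) \cong \mathbb{P}^{n-1}$. Thus through a general point minimal rational curves emanate in every tangent direction, which is exactly the behaviour of lines in $\C P^n$.

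The final and hardest step is to convert this infinitesimal picture into a biregular isomorphism $X \cong \C P^n$, and I expect the main obstacle to lie precisely here: one must show that the filling of all tangent directions is incompatible with $H\cdot C \ge 2$, so that the minimal curves are genuine lines and the family parametrizing them is the dual projective space, yielding $r_X = n+1$; the Kobayashi--Ochiai theorem then gives $X \cong \C P^n$. This globalization is where the original argument of Cho--Miyaoka--Shepherd-Barron uses reduction to positive characteristic, to produce enough rational curves and control their degenerations, while the later approach of Hwang--Mok replaces it with a Cartan--Fubini type extension theorem for the variety of minimal rational tangents; either route requires genuinely global control of the family of curves rather than the purely numerical and deformation-theoretic input of the preceding steps.
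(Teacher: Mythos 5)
First, a point of context: the paper does not prove this theorem at all. It appears in the introduction purely as motivation and is attributed to Cho--Miyaoka--Shepherd-Barron \cite[Corollary 0.3]{pseudo}; the paper's own results concern a symplectic analogue for Hamiltonian \(S^1\)-spaces, proved by entirely different, equivariant-topological methods (localization, Hattori-type arguments on weights). So there is no ``paper's own proof'' to measure your outline against; the only benchmark is the algebraic-geometry literature you invoke.

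Measured against that benchmark, your proposal has a genuine gap, and it is exactly the one you flag yourself. The preliminary steps are sound and standard: bend-and-break gives a rational curve of anticanonical degree at most \(n+1\) through every point, hence minimal curves of degree exactly \(n+1\); Wi\'sniewski's inequality with \(\ell(R)\ge n+1\) rules out any nontrivial extremal contraction (birational contractions give \(\dim E+\dim F\le 2n-2\), fiber-type contractions over a positive-dimensional base give \(\dim E+\dim F\le 2n-1\)), so \(\rho(X)=1\); and the splitting \(f^*T_X\cong\mathcal{O}(2)\oplus\mathcal{O}(1)^{\oplus(n-1)}\) shows that the variety of minimal rational tangents at a general point is all of \(\mathbb{P}(T_xX)\). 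But at that stage you have only reduced the theorem to the statement: \emph{a Fano manifold whose minimal rational curves fill every tangent direction at a general point is isomorphic to \(\mathbb{P}^n\)} (equivalently, one must produce a line, i.e.\ \(H\cdot C=1\), so that Kobayashi--Ochiai applies). That statement \emph{is} the theorem of Cho--Miyaoka--Shepherd-Barron; the numerical and deformation-theoretic preliminaries were classically known, and the whole difficulty of the result lies in the globalization you defer to ``reduction to positive characteristic'' or to Hwang--Mok's Cartan--Fubini extension. Citing those arguments is citing the result being proved, not proving it. Your outline is therefore an accurate roadmap of the known proof architecture, but the core argument --- the step that converts the infinitesimal picture into a biregular isomorphism --- is missing, so the proposal does not constitute a proof.
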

The goal of this article is to investigate  analogous questions for symplectic manifolds. More precisely, let \((M,\omega)\) be a symplectic manifold. It is known that \(M\) admits an almost complex structure \(J\) which is compatible with \(\omega\), i.e. \(\omega(\cdot ,J\cdot)\) is a Riemannian metric. Moreover, the space of such  structures is contractible. Hence, we can define complex invariants of the  tangent bundle \(TM\), for instance Chern classes. Let in particular \(c_1\) be the first Chern class of \(TM\). In analogy with the pseudo-index of a complex variety  we define the pseudo-index of \((M,\omega)\) to be
\begin{align*}
\rho_M = \inf \left\lbrace c_1[S] \mid S \text{\,\,is a  symplecticly embedded  \(2\)-sphere in \,}M  \right\rbrace.
\end{align*}

	\begin{question}\label{Q}
		Let \((M, \omega)\) be a compact connected symplectic manifold of (real) dimension \(2n\).\\
		\emph{(a)} Is \(n+1\) an upper bound for \(\rho_M\)?\\
		\emph{(b)} Does \(\rho_M=n+1\) imply that \(M\) is homotopy equivalent/ diffeomorphic / symplectomorphic \footnote{This means that \((M, \omega)\) is symplectomorphic to \((\C P^n, r\omega_{FS})\), where \(\omega_{FS}\) is the Fubini-Study symplectic form on \(\C P^n\) and \(r\) is a non-zero constant.} to  \(\C P^n\)?
	\end{question}
	
	In this paper we investigate the previous questions when the symplectic manifold is endowed with a  Hamiltonian \(S^1\)-action.
	
	So let \((M,\omega)\) be a symplectic manifold of real dimension \(2n\)  that can be endowed with a symplectic \(S^1\)-action. The \(S^1\)-action is called \textbf{Hamiltonian}  if there exists a smooth map \(\psi\colon M\rightarrow\R\), such that \(d\psi =-\iota_{\xi}\omega\), where \(\xi\) is the vector field on \(M\) generated by the \(S^1\)-action. The map \(\psi\) is called \textbf{moment map} and its set of critical points coincides with the \textbf{set of fixed points \(M^{S^1}\)} of the action.
	\begin{definition}
	Let \((M,\omega)\)  and \(\psi\colon M\rightarrow\R\) be as above. We call the triple \(\ham\) a \textbf{Hamiltonian \(S^1\)-space}, if the set of fixed points \(M^{S^1}\) is finite and the manifold \(M\) is compact and connected. 
	\end{definition}

In this category we introduce the notion of  \textbf{equivariant pseudo-index}. To define this we need the existence of a so called \textbf{toric \(1\)-skeleton}, as defined by Godinho, Sabatini and von Heymann in \cite{1224}. We recall that  a toric \(1\)-skeleton of a Hamiltonian \(S^1\)-space \(\ham\) of dimension \(2n\) is a collection \(\mathcal{S} \) of \(S^1\)-invariant symplecticly  embedded  \(2\)-spheres in \(M\), such that the Poincar\'e dual to their class in homology is the Chern class \(c_{n-1}\) (see Definition \ref{Def:toric1sk}). In \cite{1224} the authors give various conditions for the existence of a toric \(1\)-skeleton and remark that there are no known examples  of Hamiltonian \(S^1\)-spaces that do not admit a toric 1-skeleton. Whenever \(\ham\) admits a toric \(1\)-skeleton \(\mathcal{S}\), we define its \textbf{equivariant pseudo-index} by
\begin{align*}
\rho_M^{S^1}(\mathcal{S})= \min\left\lbrace c_1[S] \mid S\in \mathcal{S}\right\rbrace .
\end{align*}

	We focus on the case  when the vector of even Betti numbers of \(M\) is \textbf{unimodal} i.e. \(b_0\leq b_2 \leq \dots \leq b_{2\lfloor n / 2 \rfloor}\). Note that this  assumption  is not very restrictive. For example, the vector of even Betti numbers of a Hamiltonian \(S^1\)- space is unimodal, if the corresponding moment map  is
	index increasing \cite{Cho}.

	We prove the following upper bound for the equivariant pseudo-index.
	\begin{proposition}\label{Pro:boundpi}
		Let \(\ham\) be a  Hamiltonian \(S^1\)-space of dimension \(2n\), such that the vector of even Betti numbers of \(M\) is unimodal. If \(\ham\) admits a toric \(1\)-skeleton \(\mathcal{S}\), then the equivariant pseudo-index of \(\rho_M^{S^1}(\mathcal{S})\) is less or equal to \(n+1\).
	\end{proposition}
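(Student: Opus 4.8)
The plan is to bound the minimum $\rho_M^{S^1}(\mathcal{S})=\min_{S\in\mathcal{S}}c_1[S]$ by the \emph{average} of $c_1$ over all spheres of $\mathcal{S}$ and then to evaluate that average in closed form. Each sphere of a toric $1$-skeleton has its two poles at fixed points and, by Definition \ref{Def:toric1sk}, every fixed point is a pole of exactly $n$ spheres of $\mathcal{S}$; since the moment map is a perfect Morse function with only even indices, the number of fixed points is $\chi(M)=\sum_k b_{2k}$, and hence $|\mathcal{S}|=\tfrac{n}{2}\chi(M)$. For the numerator I would use that the total class $[\mathcal{S}]=\sum_{S}[S]$ is Poincar\'e dual to $c_{n-1}$, so that
\[
\sum_{S\in\mathcal{S}}c_1[S]=\langle c_1,[\mathcal{S}]\rangle=\int_M c_1\,c_{n-1}.
\]
Thus $\rho_M^{S^1}(\mathcal{S})\le \dfrac{2\int_M c_1 c_{n-1}}{n\,\chi(M)}$, and it suffices to prove the Chern-number inequality $\int_M c_1 c_{n-1}\le \tfrac{n(n+1)}{2}\chi(M)$, which for $\C P^n$ holds with equality.

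The next step converts this Chern number into a combinatorial expression in the Betti numbers. I would compute the Hirzebruch $\chi_y$-genus by localization: for a Hamiltonian $S^1$-space with isolated fixed points one has $\chi_y(M)=\sum_p(-y)^{\lambda_p}=\sum_k b_{2k}(-y)^k$, where $\lambda_p$ is the number of negative weights at $p$. Expanding the $\chi_y$-genus around $y=-1$, where the Hirzebruch integrand collapses to the top Chern class, and extracting the coefficient of the quadratic term, yields the Libgober--Wood identity, which in our setting reads
\[
\int_M c_1\,c_{n-1}=6\sum_{p=0}^{n}p(p-1)\,b_{2p}-\frac{n(3n-5)}{2}\,\chi(M).
\]
One checks that for $\C P^n$, where all $b_{2p}=1$, this gives $\tfrac{n(n+1)^2}{2}$, consistent with $c(\C P^n)=(1+h)^{n+1}$. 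Substituting and simplifying, the target $\int_M c_1 c_{n-1}\le\tfrac{n(n+1)}{2}\chi(M)$ becomes equivalent to
\[
\sum_{p=0}^{n}p(n-p)\,b_{2p}\ \ge\ \frac{n(n-1)}{6}\sum_{p=0}^{n}b_{2p}.
\]

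Finally I would prove this last inequality from the hypotheses on the Betti numbers. By Poincar\'e duality $(b_{2p})_{p=0}^n$ is symmetric, $b_{2p}=b_{2(n-p)}$, and by assumption it is unimodal, hence nondecreasing as $p$ approaches $n/2$; the sequence $p\mapsto p(n-p)$ is likewise symmetric about $n/2$ and unimodal. Reindexing $p$ by decreasing distance from the centre makes both sequences simultaneously nondecreasing, so Chebyshev's sum inequality gives $\tfrac1{n+1}\sum_p b_{2p}\,p(n-p)\ge\big(\tfrac1{n+1}\sum_p b_{2p}\big)\big(\tfrac1{n+1}\sum_p p(n-p)\big)$; since $\sum_{p=0}^n p(n-p)=\tfrac{n(n^2-1)}{6}$, this is exactly the required inequality, with equality forced by the flat profile $b_{2p}\equiv\text{const}$ of $\C P^n$.

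The step I expect to be the main obstacle is the middle one: establishing $\chi_y(M)=\sum_k b_{2k}(-y)^k$ in this purely symplectic (not necessarily K\"ahler) category, which rests on the rigidity and localization of the equivariant $\chi_y$-genus together with the perfectness of the moment map, and then fixing the exact constants in the Libgober--Wood identity. By contrast, the reduction to the average and the concluding Chebyshev estimate are routine once unimodality and Poincar\'e duality are available.
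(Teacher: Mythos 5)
Your proposal is correct, and its skeleton actually coincides with the paper's own reduction: your ``min \(\le\) average'' step is exactly the non-negativity of \(\sum_{S}c_1[S]-\rho\,\frac{n}{2}\chi(M)\) that the paper packages as Corollary \ref{Cor:Cinteger} (using \(|\mathcal{S}|=\frac{n}{2}\chi(M)\) and Poincar\'e duality of \([\mathcal{S}]\) to \(c_{n-1}\)), and your Libgober--Wood identity is, coefficient for coefficient, the paper's Proposition \ref{Prop:ChernBetti}, since \(\frac{5n-3n^2}{2}=-\frac{n(3n-5)}{2}\). In particular, the step you flagged as the main obstacle --- establishing the \(\chi_y\)-genus formula in the non-K\"ahler setting --- is not an obstacle at all here: the paper simply quotes that identity from Godinho--Sabatini, and you may do the same. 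Where you genuinely diverge is the final combinatorial step. The paper works with the folded vector \((b_0,\dots,b_{2\lfloor n/2\rfloor})\), writes \(\mathcal{C}(\rho,n,\textbf{b})=\sum_i A_i(\rho,n)b_{2i}\), and for \(\rho>n+1\) derives a contradiction by splitting the sum at the first index \(\lambda\) with \(A_\lambda\le 0\) and using monotonicity of the \(A_i\) plus unimodality to get \(\mathcal{C}\le b_{2\lambda}\cdot\frac{1}{2}n(n+1)(n+1-\rho)<0\). You instead keep the full symmetric vector, reduce the claim to \(\sum_{p}p(n-p)b_{2p}\ge \frac{n(n-1)}{6}\chi(M)\), and obtain it from Chebyshev's sum inequality after reindexing by distance from \(n/2\); this is sound, since both sequences are symmetric about \(n/2\) and unimodal toward it, so the reindexed pair is similarly ordered, and ties at \(p\) and \(n-p\) are harmless because both sequences take equal values there. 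The two endgames are equivalent in spirit --- both are rearrangement estimates exploiting unimodality --- but yours is cleaner to state, avoids computing the coefficients \(A_i\) explicitly, and makes the equality case (constant Betti vector, as for \(\C P^n\)) transparent; the paper's formulation has the advantage of quantifying the defect \(\mathcal{C}(\rho,n,\textbf{b})\) itself, which is then reused for the rigidity statements (Corollaries \ref{Cor:Bettino} and \ref{Cor:allesn+1}) that your averaging formulation would not directly supply.
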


Since the equivariant pseudo-index of a toric \(1\)-skeleton is an upper bound for the pseudo-index, Proposition \ref{Pro:boundpi} answers Question \(\ref{Q}\) (a) for Hamiltonian \(S^1\)-spaces when there exists a toric \(1\)-skeleton and the vector of even Betti numbers is unimodal.

Moreover, let \(\ham\) be a Hamiltonian \(S^1\)-space and \(J\) an almost complex structure compatible with \(\omega\) which is also \(S^1\)-invariant. Hence for a fixed point \(P\in M^{S^1}\), we have a linear representation of \(S^1\) on \((T_PM,J_P)\simeq \C^n\). There exist complex coordinates \((z_1,\dots, z_n)\) for \((T_PM, J_P)\), such that the \(S^1\)-representation is given by 
\begin{align*}
\lambda \cdot (z_1,\dots, z_n)=(\lambda^{a_1} z_1, \dots ,\lambda^{a_n} z_n) \quad \text{for all } \lambda \in S^1,
 \end{align*}
 where  \(a_1, \dots, a_n \) are non-zero integers. These integers are called the \textbf{weights} of the \(S^1\)-action at \(P\).  The fixed point  \(P\) is a non-degenerate critical point of the moment map \(\psi\), whose (Morse-)index is equal to twice the number of negative weights at \(P\). Hence, \(\psi: M \rightarrow \R\) is a Morse-function with critical points of just even index. So the  odd Betti numbers of \(M\) are all equal to zero and the \(2i\)-th Betti number is equal to the number fixed points with precisely \(i\) negative weights. This implies that \footnote{Let \((M,\omega)\) be a symplectic compact connected manifold of dimension \(2n\). Then \(\omega^n\) is a volume form on \(M\). Hence \(\omega^i\) defines a non-zero element in the \(2i\)-th de Rham group of \(M\) and the \(2i\)-th Betti number of \(M\) is greater or equal than 1 for all \(i\leq n\). }

  \emph{ A Hamiltonian \(S^1\)-space \(\ham \) of dimension \(2n\) has at least \(n+1\) fixed points and if the number of fixed points is minimal then \(M\)  has the same Betti numbers as \(\C P^n\).}

 \begin{example}  A standard \(S^1\)-action on \(\C P^n\)  is given by
 	\begin{align*}
 	\lambda \cdot [z_0,\dots , z_n]=[\lambda^{m_0}z_0, \dots ,\lambda^{m_n}z_n] \text{ for all } \lambda\in S^1,
 	\end{align*}
 	where \(m_0,\dots, m_n \in \Z\). The action has only isolated fixed points if and only if these integers are pairwise different. In this case the  fixed points are 
 	\begin{align*}
 	P_j=[0,\dots,0,1,0\dots,0] \quad (1 \text{ at the j-th factor and } 0\leq j\leq n)
 	\end{align*}
 	and the set of weights a the fixed point \(P_j\) is \(\left\lbrace m_i-m_j \right\rbrace_{i\neq j}\).
 \end{example}

	The main results of this paper are stated in the following theorems.
	\begin{theorem}\label{Thm:Main1}
		Let \(\ham\) be a  Hamiltonian \(S^1\)-space of dimension \(2n\) which admits a toric \(1\)-skeleton \(\mathcal{S} \) with equivariant pseudo-index \(\rho_M^{S^1}(\mathcal{S})=n+1\). Assume that \(n\leq 5\) or the even Betti numbers of \(M\) are unimodal. Then \(M\) has the same Betti numbers as \(\C P^n\) and the \(S^1\)-representations at the fixed  points are the same as those of a standard \(S^1\)-action on \(\C P^n\).\\
	\end{theorem}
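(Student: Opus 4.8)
The plan is to separate the statement into two tasks: (A) show that the even Betti numbers of \(M\) are all equal to \(1\), equivalently that \(M^{S^1}\) consists of exactly \(n+1\) points (by the recalled fact that \(n+1\) is the minimal possible number of fixed points, and that minimality forces the Betti numbers of \(\C P^n\)); and (B) show that the isotropy representations coincide with those of a standard action on \(\C P^n\). Throughout I fix an invariant \(\omega\)-compatible \(J\), so that every sphere of \(\mathcal{S}\) is \(J\)-holomorphic and \(S^1\)-invariant; such a sphere carries an \(S^1\)-action with exactly two fixed points, so \(\mathcal{S}\) defines a graph \(\Gamma\) on the vertex set \(M^{S^1}\) in which each fixed point is incident to exactly \(n\) edges, one per weight. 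Writing \(P\) for the lower and \(Q\) for the upper endpoint of an edge \(e\) (with respect to \(\psi\)) and \(w>0\) for the weight of \(S_e\) at \(P\), localisation applied to \(c_1^{S^1}\) on \(S_e\) gives
\[
c_1[S_e]=\frac{\sum_i a_i^{(P)}-\sum_i a_i^{(Q)}}{w}=2+\sum_{i=2}^{n} k_i ,
\]
where the normal weights at the two ends are compared by \(a_i^{(Q)}=a_i^{(P)}-k_i\,w\), \(k_i\in\Z\) (the analogue of the GKM relation along an invariant sphere). In particular \(c_1[S_e]\ge n+1\) for every edge is equivalent to \(\sum_{i=2}^{n}k_i\ge n-1\).

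As a quantitative tool I would use that the Poincaré dual of \(\sum_e [S_e]\) is \(c_{n-1}\), which yields the global identity
\[
\sum_{e\in E}c_1[S_e]=\int_M c_1\,c_{n-1}=\sum_{P\in M^{S^1}}\Big(\sum_i a_i^{(P)}\Big)\Big(\sum_i \tfrac{1}{a_i^{(P)}}\Big),
\]
the last equality again by localisation. Since \(\Gamma\) is \(n\)-regular we have \(|E|=n\,|M^{S^1}|/2\), so the hypothesis \(\rho_M^{S^1}(\mathcal{S})=n+1\) gives \(\int_M c_1 c_{n-1}\ge (n+1)\,n\,|M^{S^1}|/2\), an inequality that is an equality for the standard \(\C P^n\).

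For task (A) I would reuse the strategy behind Proposition \ref{Pro:boundpi}: its argument locates, using unimodality, a distinguished sphere realising the bound \(c_1\le n+1\), and in the present extremal situation that inequality must be an equality along the whole configuration it produces. I would exploit this rigidity together with unimodality and Poincaré duality \(b_{2i}=b_{2n-2i}\) to force the unimodal profile \(1=b_0\le b_2\le\cdots\le b_{2\lfloor n/2\rfloor}\) to be constant, starting from \(b_0=b_{2n}=1\). The extra input is the local control of the weights at the minimum, where all weights are positive, which bounds the admissible jumps \(k_i\) and hence the indices of the neighbouring fixed points. I expect this to be the main obstacle, precisely because the edge-sum identity gives only a lower bound for \(\int_M c_1 c_{n-1}\) and not the upper bound one would need to cap \(|M^{S^1}|\) directly; the bound on \(|M^{S^1}|\) must instead be extracted from the extremality of each \(c_1[S_e]\). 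Under the alternative hypothesis \(n\le 5\), unimodality is unavailable, and I would instead run the same local analysis and close the argument by a finite inspection of the finitely many Betti vectors \((b_0,\dots,b_{2n})\) compatible with the weight constraints.

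Once \(|M^{S^1}|=n+1\) is established, (A) follows from the recalled fact, there is exactly one fixed point \(P_k\) with \(k\) negative weights for each \(0\le k\le n\), and \(\Gamma\) is the complete graph on \(n+1\) vertices, as for \(\C P^n\). For (B) I would first upgrade \(c_1[S_e]\ge n+1\) to \(c_1[S_e]=n+1\) on every edge, so that \(\sum_{i=2}^n k_i=n-1\). The subtle point is positivity: I must show each normal jump satisfies \(k_i\ge 1\), which then forces \(k_i=1\) for all \(i\), i.e.\ the weight relation \(a_i^{(Q)}=a_i^{(P)}-w\) characteristic of a projective line in \(\C P^n\). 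Writing the weights at the minimum \(P_0\) as \(m_1-m_0,\dots,m_n-m_0\) for pairwise distinct integers \(m_0,\dots,m_n\), propagating this relation along the edges of \(\Gamma\) and verifying the cocycle condition \(w_{jk}+w_{kl}=w_{jl}\) yields weights \(\{m_i-m_j\}_{i\ne j}\) at each \(P_j\), exactly those of a standard \(S^1\)-action on \(\C P^n\). The positivity \(k_i\ge 1\) — equivalently, that the normal bundle of each \(S_e\) has only non-negative summands — is where I expect the second genuine difficulty, and it is here that the \(J\)-holomorphicity of the spheres and the minimality of \(P_0\) should be decisive.
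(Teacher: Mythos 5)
Your decomposition into (A) the Betti numbers and (B) the weights matches the paper's structure, and several of your identities (localisation along each sphere, \(\sum_{e}c_1[S_e]=\int_M c_1c_{n-1}\), extremality forcing \(c_1[S_e]=n+1\)) are exactly the ones the paper uses. But both of the steps you yourself flag as "the main obstacle" and "the second genuine difficulty" are real gaps, and in each case the paper closes them with a tool your proposal does not have. For (A), the missing ingredient is Proposition \ref{Prop:ChernBetti}: the Chern number \(\int_M c_1c_{n-1}\) is a \emph{universal linear function of the Betti numbers alone}. You instead expand \(\int_M c_1c_{n-1}\) by ABBV into fixed-point weight data, which is why you are stuck with only a lower bound and no way to "cap" \(|M^{S^1}|\). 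With Proposition \ref{Prop:ChernBetti}, the inequality \(\sum_e c_1[S_e]\geq (n+1)\tfrac{n}{2}\chi(M)\) becomes the purely combinatorial statement \(\mathcal{C}(n+1,n,\textbf{b})\geq 0\) of Corollary \ref{Cor:Cinteger}, and the same coefficient analysis as in the proof of Proposition \ref{Pro:boundpi} (which, contrary to your description, never locates a "distinguished sphere" — it is an arithmetic argument on the coefficients \(A_i(\rho,n)\)) shows under unimodality that \(\mathcal{C}(n+1,n,\textbf{b})\leq b_{2\lambda}\cdot\tfrac12 n(n+1)\cdot 0=0\), with equality forcing \(b_{2i}=1\) for all \(i\); for \(n\leq 5\) the inequality already pins down the Betti vector by direct inspection. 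This is Corollary \ref{Cor:Bettino}.

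For (B), your plan reduces to proving that every normal jump satisfies \(k_i\geq 1\) along every edge, and you have no argument for this: \(J\)-holomorphicity and minimality of \(P_0\) do not give it (invariant spheres in Hamiltonian \(S^1\)-spaces can perfectly well have negative normal degrees — think of exceptional spheres in equivariant blow-ups), and in the extremal situation the constraint \(\sum_i k_i=n-1\) by itself allows, say, some \(k_i=2\) compensated by some \(k_j=0\) or \(k_j<0\). The paper replaces this positivity claim by a rigidity statement from equivariant K-theory: the Atiyah--Segal localisation formula (Lemma \ref{Lem:Laurent}) shows that
\begin{align*}
\varphi_i(t)=\frac{\prod_{j\neq i}\bigl(1-t^{a_i-a_j}\bigr)}{\prod_{k=1}^{n}\bigl(1-t^{\omega_{ik}}\bigr)}
\end{align*}
is a Laurent polynomial, where the \(a_i\) come from Lemma \ref{Lem:sumGamma} (itself proved by your kind of sphere-localisation argument, using divisibility of \(\Gamma_i-\Gamma_j\) by \(n+1\) and \(\sum_j\Gamma_j=0\)). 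Starting at the minimum, where all exponents are positive, \(\varphi_0(0)=1\) and equality of the degrees of numerator and denominator force \(\varphi_0\equiv 1\), hence the weights at \(P_0\) are \(\{a_0-a_j\}_{j\neq 0}\); one then inducts upward, using Corollary \ref{Co:possibleweights} to identify the negative weights at \(P_i\) from the already-determined lower fixed points and repeating the Laurent-polynomial argument for the positive ones. Without this K-theoretic input (or Hattori's original theorem, which the paper's argument streamlines), your propagation scheme cannot get started, because the case \(k_i=1\) for all \(i\) is precisely what has to be proved.
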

	
	We prove that Theorem \ref{Thm:Main1} has the following stronger consequence, which  answers  Question \ref{Q} \((b)\).
	
	\begin{theorem}\label{Thm:Main2}
	Under the hypotheses of Theorem \ref{Thm:Main1}, \(M\) is homotopy equivalent to \(\C P^n\).
	\end{theorem}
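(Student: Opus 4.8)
The plan is to promote the conclusion of Theorem \ref{Thm:Main1} --- that $M$ has the Betti numbers of $\C P^n$ and the fixed-point representations of a standard $S^1$-action on $\C P^n$ --- into a homotopy equivalence by first identifying the integral cohomology \emph{ring} of $M$ with that of $\C P^n$ and then running a Whitehead-type argument. Throughout I will use that $\psi$ is a Morse function all of whose critical points have even index.

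This Morse-theoretic input has two immediate consequences. Since there are no critical points of index $1$, the associated CW structure on $M$ has no $1$-cells, so $M$ is simply connected; and since all indices are even the Morse differential vanishes, so $H^*(M;\Z)$ is free abelian. By Theorem \ref{Thm:Main1} it has the Betti numbers of $\C P^n$, hence $H^{2i}(M;\Z)\cong\Z$ for $0\le i\le n$ and $H^*(M;\Z)$ vanishes in all remaining degrees.

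The heart of the proof is the cup-product structure. Let $x$ generate $H^2(M;\Z)$. Writing $x^i=a_i\,g_i$ with $g_i$ a generator of $H^{2i}(M;\Z)$, the relation $x^n=a_i\,a_{n-i}\,(g_i\cup g_{n-i})$ shows that if $x^n$ generates $H^{2n}(M;\Z)$ then every $a_i=\pm1$; consequently $x^i$ generates $H^{2i}(M;\Z)$ for each $i$ and $H^*(M;\Z)\cong\Z[x]/(x^{n+1})=H^*(\C P^n;\Z)$ as rings. Thus everything reduces to proving $\langle x^n,[M]\rangle=\pm1$. To compute this I would invoke equivariant localization: the action is equivariantly formal because the odd cohomology vanishes, so $x$ admits a lift $\widetilde x\in H^2_{S^1}(M;\Q)$, and the Atiyah--Bott--Berline--Vergne formula yields
$$\langle x^n,[M]\rangle=\sum_{P\in M^{S^1}}\frac{(\widetilde x|_P)^{\,n}}{\prod_j w_j(P)},$$
with $w_j(P)$ the weights at $P$. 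Theorem \ref{Thm:Main1} identifies the $w_j(P)$ with those of a standard $S^1$-action on $\C P^n$, and the restrictions $\widetilde x|_P$ are constrained by the compatibility conditions imposed along the invariant spheres of the toric $1$-skeleton $\mathcal{S}$; matching both data with the standard model collapses the sum to the identity $\sum_{j} m_j^{\,n}\big/\prod_{i\ne j}(m_j-m_i)=1$, giving $\langle x^n,[M]\rangle=\pm1$.

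With the ring isomorphism in hand the homotopy equivalence is formal. The class $x$ classifies a map $f\colon M\to\C P^\infty=K(\Z,2)$; as $\dim M=2n$, cellular approximation homotopes $f$ into the $2n$-skeleton $\C P^n\subset\C P^\infty$, producing $g\colon M\to\C P^n$ that is an isomorphism on $H^2$ and hence, by the previous paragraph, in every degree. Since $M$ and $\C P^n$ are simply connected and $g$ is an isomorphism on integral (co)homology, Whitehead's theorem makes $g$ a homotopy equivalence. I expect the real obstacle to be the computation of $\langle x^n,[M]\rangle$, and specifically the control of the fixed-point restrictions $\widetilde x|_P$: the weights fix the denominators of the localization sum, but one must still show that a generator of $H^2$ lifts equivariantly so that its restrictions reproduce the standard model, and this is precisely the point at which the combinatorics of the toric $1$-skeleton $\mathcal{S}$ enters.
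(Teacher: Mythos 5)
Your endgame and your reduction are both sound: Morse theory with only even indices gives simple connectivity and free cohomology with the Betti numbers of \(\C P^n\); the ring isomorphism does reduce to \(\langle x^n,[M]\rangle=\pm 1\); and once the ring is identified, the classifying-map-plus-Whitehead argument works (the paper does this last step differently in Theorem \ref{Thm:HomotopyEquivalence}, extending a homotopy equivalence \(X^2\to\C P^1\) over the skeleta using \(\pi_k(\C P^n)=0\) for \(3\le k\le 2n\), but your cellular-approximation version is an acceptable substitute). The genuine gap is exactly where you flagged it, and it is not a technicality but the heart of the matter. Write \(c_1=kx\) in \(H^2(M;\Z)\cong\Z\). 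The only geometric input available to you is \(c_1[S]=n+1\) for every sphere \(S\in\mathcal{S}\) (Corollary \ref{Cor:allesn+1}), hence \(\langle x,[S]\rangle=q:=(n+1)/k\) is, up to sign, a positive integer dividing \(n+1\). The compatibility conditions along the skeleton then determine the fixed-point restrictions of any equivariant lift \(\widetilde x\) only up to this unknown factor: since the graph underlying \(\mathcal{S}\) is the complete graph with edge weights \(a_i-a_j\), they force \(\widetilde x|_{P_i}=(q\,a_i+\mathrm{const})\,t\), where \(t\) generates \(H^2_{S^1}(\mathrm{pt};\Z)\). The interpolation identity you quote then turns the localization sum into \(\langle x^n,[M]\rangle=q^n\), not \(\pm1\). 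Nothing in your argument excludes \(q>1\): the assignment \(x=c_1\), \(\widetilde x=c_1^{S^1}\) (that is, \(k=1\), \(q=n+1\)) satisfies every constraint you impose --- correct weights, correct edge integrality, correct value of every Chern number --- because localization sees only rational equivariant cohomology and is blind to how \(c_1\) sits inside the integral lattice \(H^2(M;\Z)\). (For \(n=2\) unimodularity of the intersection form would force \(q^2=1\), but for \(n\ge3\) Poincar\'e duality gives no such constraint: it only says \(x\cup g_{n-1}\) generates \(H^{2n}\), which is compatible with \(x^{n-1}=\pm q^n g_{n-1}\).)

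What is missing is precisely the statement that \(c_1\) is divisible by \(n+1\) in integral cohomology, equivalently that \(c_1/(n+1)\) is an integral generator of \(H^2(M;\Z)\). This cannot be extracted from the ABBV formula; it is the content of Tolman's integral basis of Kirwan classes, quoted in the paper as Lemma \ref{Lem:Tolman}: \(\tau_1=\frac{\Lambda_1^-}{\Gamma_1-\Gamma_0}\,c_1\) generates \(H^2(M;\Z)\), and with the weights produced by Theorem \ref{Thm:Main1} one has \(\Gamma_1-\Gamma_0=(n+1)\Lambda_1^-\), so \(\tau_1=c_1/(n+1)\). The paper in fact bypasses your localization computation entirely: it obtains the full ring \(\Z[x]/\langle x^{n+1}\rangle\) directly from the same lemma (Remark \ref{Rem:ring}, Corollary \ref{Cor:Ring}), since all the \(\tau_i\) are explicit rational multiples of powers of \(c_1\) determined by the fixed-point data, which Theorem \ref{Thm:Main1} identifies with the standard \(\C P^n\) data. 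To repair your proof, insert Tolman's lemma (or an equivalent equivariant-Morse-theoretic argument producing integral Kirwan classes) before the localization step; with \(q=\pm1\) established, the rest of your argument goes through as written.
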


In low dimension the results are stronger. Namely, by Karshon's classification of Hamiltonian \(S^1\)-spaces of dimension \(4\) (\cite{12}) and by a theorem of Wall for six-dimensional manifolds (\cite{wall}), we obtain the following corollary.

\begin{corollary}
Let \(\ham\) be a  Hamiltonian \(S^1\)-space of dimension \(2n\) which admits a toric \(1\)-skeleton \(\mathcal{S} \) with equivariant pseudo-index \(\rho_M^{S^1}(\mathcal{S})=n+1\). \\
\emph{(a)} If \(n=2\), then \((M,\omega)\) is symplectomorphic to \((\C P^2, r\omega_{FS})\), where \(\omega_{FS}\) is the Fubini-Study symplectic form on \(\C P^2\) and \(r\) is a positive constant.\\
\emph{(b)} If \(n=3\), then \(M\) is diffeomorphic to \(\C P^3\).

\end{corollary}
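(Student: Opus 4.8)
The proof of both parts follows the same scheme: Theorem \ref{Thm:Main1} pins down enough of the fixed point data that the appropriate classification result in the relevant dimension can be applied, and the role of the weight information is to force the characteristic-class data (not merely the Betti numbers) to match \(\C P^n\).

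\textbf{Part (a).} Since \(n=2\), Theorem \ref{Thm:Main1} gives that \(M\) has the Betti numbers of \(\C P^2\) --- hence exactly three isolated fixed points --- and that the weights of the \(S^1\)-representations at these points coincide with those of a standard linear action \(\lambda\cdot[z_0,z_1,z_2]=[z_0,\lambda^{a}z_1,\lambda^{b}z_2]\) on \(\C P^2\). The plan is to feed this into Karshon's classification of compact Hamiltonian \(S^1\)-spaces in dimension \(4\), which asserts that such a space is determined up to \(S^1\)-equivariant symplectomorphism by its decorated graph. Here all fixed points are isolated, so no fat vertices occur, and the combinatorial type of the graph --- which vertex is the minimum, which is the maximum, and along which isotropy spheres they are joined --- is read off directly from the weights and hence agrees with that of the standard \(\C P^2\). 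The only remaining freedom in the decoration is an overall normalization of the moment map labels, which corresponds precisely to rescaling the symplectic form. I would therefore conclude that \(M\) is \(S^1\)-equivariantly symplectomorphic --- and in particular symplectomorphic --- to \((\C P^2, r\omega_{FS})\) for some \(r>0\), the constant \(r\) being fixed by the symplectic volume of \(M\).

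\textbf{Part (b).} For \(n=3\), Theorem \ref{Thm:Main2} already gives that \(M\) is homotopy equivalent to \(\C P^3\); in particular \(M\) is a closed, simply connected smooth \(6\)-manifold with torsion-free cohomology, whose cohomology ring is \(\Z[x]/(x^4)\) with \(\deg x=2\). To upgrade this to a diffeomorphism I would invoke Wall's classification of such \(6\)-manifolds, according to which the diffeomorphism type is determined by the cohomology ring together with the linear form \(u\mapsto \langle p_1\cup u,[M]\rangle\) on \(H^2\) and the second Stiefel--Whitney class \(w_2\). The ring already matches \(\C P^3\) by homotopy equivalence, so it remains to compute \(p_1\) and \(w_2\) and check that they agree with those of \(\C P^3\), namely that \(M\) is spin and \(\langle p_1\cup x,[M]\rangle=4\).

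This is where the fixed point information enters. Choosing an \(S^1\)-invariant compatible almost complex structure, the restriction of each equivariant Chern class to a fixed point is the corresponding elementary symmetric function of the weights there; since these weights agree with those of the standard action on \(\C P^3\), Atiyah--Bott--Berline--Vergne localization shows that all Chern numbers of \(M\) equal those of \(\C P^3\). In particular the restrictions of \(c_1\) match, forcing \(c_1=4x\) for a generator \(x\) of \(H^2(M;\Z)\), whence \(w_2\equiv c_1\equiv 0 \pmod 2\) and \(M\) is spin. Writing \(p_1=\gamma x^2\) and using \(c_1=4x\), the pairing \(\langle p_1\cup x,[M]\rangle=\tfrac14\int_M (c_1^3-2c_1c_2)\) is computed by localization to equal its \(\C P^3\)-value, so \(\gamma=4\). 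Thus every invariant appearing in Wall's theorem coincides with that of \(\C P^3\), and we obtain the desired diffeomorphism. The main obstacle in both parts is exactly this verification that the characteristic-class data are forced to agree with \(\C P^n\); the weight hypothesis of Theorem \ref{Thm:Main1}, together with localization, is what makes it possible.
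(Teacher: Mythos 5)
Your overall strategy is the same as the paper's, which deduces part (a) from Karshon's classification \cite{12} and part (b) from Wall's theorem \cite{wall}. Your part (b) is a correct and essentially complete execution of this: Theorem \ref{Thm:Main2} gives the homotopy equivalence (hence simple connectivity, torsion-free cohomology and the ring \(\Z[x]/\langle x^4\rangle\)); Lemma \ref{Lem:Tolman} (or simply \(c_1^3[M]=64\) computed by localization) forces \(c_1=4x\), so \(w_2\equiv c_1\equiv 0 \pmod 2\) and \(M\) is spin; and localization gives \(\langle p_1\cup x,[M]\rangle=\frac{1}{4}\left(c_1^3-2c_1c_2\right)[M]=\frac{1}{4}(64-48)=4\), matching \(\C P^3\), so Wall's theorem applies.

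In part (a), however, there is a genuine (though fillable) gap at the sentence asserting that ``the only remaining freedom in the decoration is an overall normalization of the moment map labels.'' Karshon's uniqueness theorem matches \emph{decorated} graphs, and the decoration includes the moment map values at the vertices; these are not determined by the weights alone. Along an invariant sphere joining \(P_i\) to \(P_j\) on which \(S^1\) acts with weight \(m\) at \(P_i\), one has \(\psi(P_j)-\psi(P_i)=m\int_{S_{ij}}\omega\). So if the weights are \(\{a,b\}\) at \(P_0\), \(\{-a,b-a\}\) at \(P_1\) and \(\{-b,a-b\}\) at \(P_2\) (with \(0<a<b\)), the label differences are \(aA_{01}\), \((b-a)A_{12}\), \(bA_{02}\), where the symplectic areas \(A_{01},A_{12},A_{02}\) of the three spheres are a priori constrained only by \(aA_{01}+(b-a)A_{12}=bA_{02}\); that is a two-parameter family, not the one-parameter family of rescalings. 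To close the gap, use that \(b_2(M)=1\) and that, by Corollary \ref{Cor:allesn+1}, \(c_1[S]=3\) for every sphere of the skeleton: let \(y\) generate \(H_2(M;\Z)\cong\Z\), write \([S_{ij}]=m_{ij}\,y\) with \(m_{ij}>0\) (the spheres are symplectic) and \(c_1(y)=k\); then \(m_{ij}k=3\) for all three spheres, so all \(m_{ij}\) coincide, the spheres are homologous, and hence \(A_{01}=A_{12}=A_{02}=:A\). Only then are the moment map labels, up to an irrelevant additive constant, exactly those of \((\C P^2, A\,\omega_{FS})\) with its standard action, and Karshon's theorem yields the claimed (equivariant) symplectomorphism.
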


Here is a brief overview of the structure of this paper. In Section 2 we review some important results needed in this work and explain the relation between the Betti numbers and the equivariant pseudo-index. In Section 3  we discuss upper bounds for the equivariant  pseudo-index. In the last sections we prove Theorem \ref{Thm:Main1} and \ref{Thm:Main2} .\\

\textbf{Acknowledgements.} This work is part of the SFB/TRR 191 'Symplectic Structures in Geometry, Algebra and Dynamics', funded by the DFG. I would like to thank Silvia Sabatini for very helpful discussions and comments.
\end{section}

\begin{section}{Background material}
	
	In this section we briefly review some background material needed in the following sections.

\begin{subsection}{Equivariant Cohomology}
       (We refer to \cite{AB} and \cite{supersymme} for an extensive introduction to equivariant cohomology.)
       
       Let \(S^{\infty}\)  be the unit sphere in \(\C^{\infty}\). Up to homotopy equivalence \(S^{\infty}\) is the only contractible space on which \(S^1\) acts freely. Now let \(M\) be a manifold endowed with a smooth \(S^1\)-action. In the Borel-model the \(S^1\)-equivariant cohomology of  \(M\) is defined as follows. The diagonal action of \(S^1\) on \(M\times S^{\infty}\) is free. By   \(M \times_{S^1} S^{\infty}\) we denote the orbit space. The \(S^1\)-equivariant cohomology ring of \(M\) is
       \begin{align*}
       H_{S^1}^*(M;R)\colon =H^*(M \times_{S^1} S^{\infty};R),
       \end{align*}
       where \(R\) is the coefficient ring.\\
       Let \(M^{S^1}\) be the set of fixed points and assume it is not empty. Let \(F\) be one of its connected components. The inclusion map \(i_F\colon F \rightarrow M\) is an \(S^1\)-equivariant  map, so it induces a map
       \begin{align*}
       i_F^*\colon H_{S^1}^*(M) \rightarrow H_{S^1}^*(F).
       \end{align*}

        Moreover, the projection \(\ M \times_{S^1} S^\infty\ \rightarrow \C P^\infty \) induces a push-forward map in equivariant cohomology
        \begin{align*}
         H_{S^1}^*(M)\rightarrow H^{*-\operatorname{dim}(M)}(\C P^\infty),
        \end{align*}
        which can be seen as integration along the fibers. So we denote it by \(\int_M\). The following theorem, due to Atiyah-Bott and Berline-Vergne (see \cite{AB}, \cite{BV}) gives a formula for the map \(\int_M\) in terms of fixed point set data.

        \begin{theorem}\label{Thm:ABBV}(ABBV Localization formula) Let \(M\) be a compact oriented manifold endowed with a smooth \(S^1\)-action. Given \(\mu\in H_{S^1}^* (M;\Q) \)
                \begin{align*}
                \int_M \mu \,=\, \sum_{F\subset M^{S^1}} \int_F \dfrac{i_F^*(\mu)}{\text{e}^{S^1}(N_F)},
                \end{align*}
                where the sum runs over all connected components \(F\) of \(M^{S^1}\) and \(\text{e}^{S^1}(N_F)\) is the equivariant Euler class of the normal bundle
                 \(N_F\) to \(F\).
        \end{theorem}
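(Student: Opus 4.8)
The plan is to deduce the formula from the Borel--Atiyah--Segal localization theorem together with a purely formal manipulation of equivariant pushforwards. Throughout I write $R=H^*(\C P^\infty;\Q)=\Q[u]$ with $\deg u=2$ and regard every equivariant cohomology group in sight as an $R$-module via pullback along the fibration $M\times_{S^1}S^\infty\to\C P^\infty$; let $\mathcal{R}=\Q(u)$ be the field of fractions of $R$. The whole argument takes place after inverting $u$, i.e. in the localized modules $H^*_{S^1}(-;\Q)\otimes_R\mathcal{R}$, which is where the formula naturally lives, since the individual summands $\int_F i_F^*(\mu)/e^{S^1}(N_F)$ carry powers of $u$ in their denominators.

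The first ingredient is the localization theorem: the inclusion $i\colon M^{S^1}\hookrightarrow M$ induces a restriction map $i^*\colon H^*_{S^1}(M;\Q)\to H^*_{S^1}(M^{S^1};\Q)$ whose kernel and cokernel are $R$-torsion, so that $i^*\otimes\mathrm{id}_{\mathcal{R}}$ is an isomorphism of $\mathcal{R}$-vector spaces. Because $S^1$ acts trivially on the fixed locus, $H^*_{S^1}(M^{S^1})\cong H^*(M^{S^1})\otimes R$; and because $S^1$ acts on each normal bundle $N_F$ with only non-zero weights, its equivariant Euler class has the form $e^{S^1}(N_F)=c\,u^{k}+(\text{lower order in }u)$ with $k=\dim_{\C}N_F$ and $c\neq0$ the product of the weights. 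Hence $e^{S^1}(N_F)$ becomes a unit in the localized ring, and dividing by it makes sense there.

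The second, essentially formal, step uses the equivariant Gysin maps $i_{F*}\colon H^*_{S^1}(F)\to H^*_{S^1}(M)$ and the self-intersection identity $i_{F'}^*\,i_{F*}(\beta)=\beta\cup e^{S^1}(N_F)$ when $F'=F$, together with $i_{F'}^*\,i_{F*}(\beta)=0$ when $F'\neq F$ (the components being disjoint). I claim that after inverting $u$ every class satisfies
\[
\mu=\sum_{F\subset M^{S^1}} i_{F*}\!\left(\frac{i_F^*(\mu)}{e^{S^1}(N_F)}\right).
\]
To verify this I restrict both sides to an arbitrary component $F'$: the left side yields $i_{F'}^*(\mu)$, while on the right only the $F'$-summand survives and contributes $\tfrac{i_{F'}^*(\mu)}{e^{S^1}(N_{F'})}\cup e^{S^1}(N_{F'})=i_{F'}^*(\mu)$. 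Since $i^*$ is injective on the localized module, the two sides agree. Applying $\int_M$ and invoking the functoriality of pushforward, $\int_M\circ\,i_{F*}=\int_F$ (pushforward along $F\to M\to\mathrm{pt}$ equals pushforward along $F\to\mathrm{pt}$), gives exactly the stated formula in $\mathcal{R}$.

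The main obstacle is the localization theorem itself. I would prove it by reducing to the claim that $H^*_{S^1}(M\setminus M^{S^1};\Q)$ is $R$-torsion and feeding this into the long exact sequence of the pair $(M,M^{S^1})$. Over the fixed-point-free part every orbit has finite stabiliser, and on an invariant tubular slice around such an orbit the equivariant cohomology is that of the quotient; by Mayer--Vietoris and induction on the orbit type one reduces to the model free case, where $u$ pulls back to the Euler class of the circle bundle and hence $u^N=0$ in the finite-dimensional cohomology of the quotient for $N$ large, forcing the $R$-torsion. A completely different route avoids this module theory and works in the Cartan model: away from $M^{S^1}$ the generating vector field $\xi$ is nowhere zero, so an invariant connection one-form furnishes an explicit equivariant primitive of $\mu$, and Stokes' theorem on $M$ with shrinking invariant tubular neighbourhoods of the $F$ removed produces precisely the boundary contributions $\int_F i_F^*(\mu)/e^{S^1}(N_F)$ in the limit as the tubes collapse. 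The module-theoretic route is cleaner to present, so I would follow it.
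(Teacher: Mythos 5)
The paper gives no proof of this theorem at all: it is quoted as the classical localization formula of Atiyah--Bott and Berline--Vergne, with citations to both original papers, so there is no internal argument to compare yours against. What you have written is, in outline, precisely the proof in the first cited reference (Atiyah--Bott): Borel localization making the restriction \(i^*\colon H^*_{S^1}(M;\Q)\to H^*_{S^1}(M^{S^1};\Q)\) an isomorphism after inverting \(u\); invertibility of \(e^{S^1}(N_F)\) in the localized ring, using that the weights on \(N_F\) are non-zero and that positive-degree classes on the compact \(F\) are nilpotent; the decomposition \(\mu=\sum_F i_{F*}\bigl(i_F^*(\mu)/e^{S^1}(N_F)\bigr)\), checked by restriction to components via the self-intersection formula and injectivity of the localized \(i^*\); and finally \(\int_M\circ\, i_{F*}=\int_F\). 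This is correct, and it is worth noting that your two proposed routes to the Borel localization theorem correspond exactly to the paper's two citations: the module-theoretic argument is Atiyah--Bott, while the Cartan-model/Stokes argument with shrinking tubes is in the spirit of Berline--Vergne. Two routine points should be made explicit in a full write-up. First, for \(\int_F\) and the Gysin maps \(i_{F*}\) to be defined one needs \(F\) and \(N_F\) oriented; this holds because the non-zero weights endow \(N_F\) with a complex structure, which orients \(N_F\) and hence, together with the orientation of \(M\), orients \(F\). Second, ``the long exact sequence of the pair \((M,M^{S^1})\)'' requires the intermediate step that \(H^*_{S^1}(M,M^{S^1};\Q)\) is identified, via homotopy invariance and excision through an invariant tubular neighbourhood of \(M^{S^1}\), with the equivariant cohomology of fixed-point-free spaces, to which your finite-stabiliser covering argument then applies. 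With these additions the proposal is a complete, standard proof of the stated formula.
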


        In particular, if \(P_i\in M^{S^1}\) is an isolated fixed point, then \(N_{\left\lbrace P_i\right\rbrace }=T_{P_i}M\) and
        \begin{align*}
        e^{S^1}(T_{P_i}M)=\left( \prod_{j=1}^{n}\omega_{ij}\right) x^n \in H^*_{S^1}(\left\lbrace P_i\right\rbrace , \Z)=\Z[x],
        \end{align*}
        where \(\omega_{i1},\dots,\omega_{in}\) are the weights\footnote{Note that the signs of the individual weights are not well-defined, but the sign of their  product is.} of the \(S^1\)-representation on \(T_{P_i}M\).
        Hence, if the set of fixed points is isolated we have the following corollary.

        \begin{corollary}\label{Cor:ABBVisol}
                Let \(M\)  be a compact oriented manifold endowed with a smooth \(S^1\)-action such that \(M^{S^1}=\left\lbrace P_0,\dots P_N \right\rbrace \). Given \(\alpha \in H_{S^1}^*(M;\Q)\)
                \begin{align*}
                \int_M \alpha = \sum_{i=0}^{N}\dfrac{\alpha(P_i)}{\left( \prod_{j=1}^{n}\omega_{ij}\right) x^n},
                \end{align*}
                where \(\omega_{i1},\dots,\omega_{in}\) are the weights of the \(S^1\)-representation on \(T_{P_i}M\) and \(\alpha(P_i)=i^*_{P_i}(\alpha)\).
        \end{corollary}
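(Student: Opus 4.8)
The statement follows immediately from Theorem~\ref{Thm:ABBV} by specializing the general localization formula to the case in which every connected component of the fixed point set is a single isolated point. The plan is to identify each ingredient of that formula in the present situation and then substitute the explicit expression for the equivariant Euler class recorded just above the statement.

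First I would observe that, since \(M^{S^1}=\left\lbrace P_0,\dots,P_N\right\rbrace\) consists of isolated points, the sum over connected components \(F\subset M^{S^1}\) in Theorem~\ref{Thm:ABBV} collapses to a finite sum over the \(N+1\) points \(P_i\), and that for each of them the normal bundle is the full tangent space, \(N_{\left\lbrace P_i\right\rbrace}=T_{P_i}M\).

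Next I would note that the push-forward \(\int_{\left\lbrace P_i\right\rbrace}\) along a zero-dimensional component is degree-preserving and, since \(\left\lbrace P_i\right\rbrace\times_{S^1}S^\infty=\C P^\infty\) is mapped to \(\C P^\infty\) by the identity, it is simply the restriction map. Hence the contribution of \(P_i\) is \(\alpha(P_i)=i^*_{P_i}(\alpha)\) divided by the equivariant Euler class. Substituting the expression \(e^{S^1}(T_{P_i}M)=\left(\prod_{j=1}^{n}\omega_{ij}\right)x^n\) into each summand then gives
\begin{align*}
\int_M \alpha = \sum_{i=0}^{N}\dfrac{\alpha(P_i)}{\left(\prod_{j=1}^{n}\omega_{ij}\right)x^n},
\end{align*}
which is the claimed formula.

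The only point requiring genuine care — and thus the closest thing to an obstacle in an otherwise immediate deduction — is the consistency of orientation conventions. As the footnote preceding the statement stresses, the signs of the individual weights \(\omega_{ij}\) are not well-defined; only the sign of their product is. One must therefore check that the orientation of \(T_{P_i}M\) induced by the \(S^1\)-invariant complex structure agrees with the orientation used in Theorem~\ref{Thm:ABBV}, so that \(\prod_{j=1}^{n}\omega_{ij}\) appears with the correct sign in the denominator. Once this bookkeeping is settled, the corollary follows at once.
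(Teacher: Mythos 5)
Your proposal is correct and follows exactly the route the paper intends: the corollary is stated as an immediate specialization of Theorem~\ref{Thm:ABBV} to isolated fixed points, using \(N_{\left\lbrace P_i\right\rbrace}=T_{P_i}M\), the fact that integration over a point is restriction, and the displayed formula \(e^{S^1}(T_{P_i}M)=\left(\prod_{j=1}^{n}\omega_{ij}\right)x^n\). Your additional remark on the sign ambiguity of individual weights versus the well-defined sign of their product matches the paper's own footnote and is the right point of care.
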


Now let \((M,\omega)\) be a compact symplectic manifold of dimension \(2n\) endowed with a Hamiltonian \(S^1\)-action. In \cite{Kirwan}, Kirwan proves important properties of the equivariant cohomology ring of this space. 
For the case that  \(M\) has the same cohomology groups as \(\C P^n\),  Tolman \cite{TolmanPetri} uses these properties and   gives an explicit basis for the equivariant and ordinary cohomology rings  in terms of the fixed point data and the (equivariant) first Chern class. The following lemma contains results of  \cite[Section 3]{TolmanPetri}.

\begin{lemma}\label{Lem:Tolman}
Let \(\ham \) be a Hamiltonian \(S^1\)-space  of dimension \(2n\) with exactly \(n+1\) fixed points. For \(i=0,\dots,n \), let \(P_i\) be the unique fixed point with exactly \(i\) negative weights. We denote by \(\Gamma_i\) the sum of all weights at \(P_i\) and by \(\Lambda_i^-\) the product\footnote{The empty product \(\Lambda_0^-\) is equal to 1.} of all negative weights at \(P_i\).  Then the following holds: A  generator of \(H^{2i}(M;\Z)=\Z\) is given by
	\begin{align*}
	\tau_i = \prod_{j=0}^{i-1}\frac{\Lambda_i^-}{\Gamma_i-\Gamma_j} c_1^i.
	\end{align*}
	Moreover, for \(i,j\in \left\lbrace 0,\dots n\right\rbrace \) we have  \(\Gamma_i>\Gamma_j\) if and only if \(i<j\).
\end{lemma}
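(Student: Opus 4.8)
The plan is to deduce both statements from the Morse theory of $\psi$ and the localization theorem of Corollary~\ref{Cor:ABBVisol}, after recording some structural facts. Since $M$ has exactly $n+1$ fixed points, $\psi$ is a perfect Morse function with a single critical point of each even index $0,2,\dots,2n$; hence $H^{2i}(M;\Z)=\Z$, all odd cohomology vanishes, and, as $[\omega]^i\neq0$ for $i\le n$, the ring $H^*(M;\R)$ is generated in degree $2$. The action is equivariantly formal, so $H^*_{S^1}(M;\Z)$ is a free $\Z[x]$-module, the restriction $H^*_{S^1}(M;\Z)\to\bigoplus_jH^*_{S^1}(P_j;\Z)=\bigoplus_j\Z[x]$ is injective, and setting $x=0$ recovers $H^*(M;\Z)$. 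Finally, the equivariant first Chern class $c_1^{S^1}$ restricts to $\Gamma_jx$ at $P_j$.

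I would prove the ordering statement first. The crucial point is that the labelling of the fixed points by index coincides with the labelling by moment value, i.e. $\psi(P_0)<\dots<\psi(P_n)$. To obtain this, note that for a regular value $c$ the long exact sequence of the pair $(M,M_{\le c})$, combined with the vanishing of all odd cohomology, shows that $H^*(M;\R)\to H^*(M_{\le c};\R)$ is a surjective ring homomorphism; since the source is generated in degree $2$ so is its image, whence the degrees in which $H^*(M_{\le c};\R)$ is non-zero form an interval $\{0,2,\dots,2r\}$. As $M_{\le c}$ is built perfectly from the critical points below $c$, an induction on their number forces the $k$-th smallest moment value to have index $2k$. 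For the sum of the weights, I observe that an equivariant extension $\hat\omega$ of $[\omega]$ restricts at $P_j$ to a nonconstant linear function of $\psi(P_j)$ times $x$, and together with $x$ it spans the rank-two space $H^2_{S^1}(M;\R)$; expanding $c_1^{S^1}$ in this basis and restricting to $P_j$ yields an affine relation $\Gamma_j=s\,\psi(P_j)+t$. Since $P_0$ and $P_n$ are the minimum and maximum of $\psi$ with $\Gamma_0>0>\Gamma_n$, the slope satisfies $s<0$, so $\Gamma_j$ strictly decreases with $\psi(P_j)$ and hence with the index. This gives $\Gamma_i>\Gamma_j$ if and only if $i<j$; in particular all differences $\Gamma_i-\Gamma_j$ used below are non-zero.

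For the generator, I would invoke the canonical (Kirwan) classes $\tau_i^{S^1}\in H^{2i}_{S^1}(M;\Z)$ of \cite{TolmanPetri} (see also \cite{Kirwan}): the unique integral classes of degree $2i$ with $\tau_i^{S^1}|_{P_j}=0$ whenever $\psi(P_j)<\psi(P_i)$ and $\tau_i^{S^1}|_{P_i}=\Lambda_i^-x^i$, the equivariant Euler class of the negative normal bundle at $P_i$. These form a $\Z[x]$-basis of $H^*_{S^1}(M;\Z)$, and their images $\tau_i:=\tau_i^{S^1}|_{x=0}$ generate $H^{2i}(M;\Z)$. I then consider the auxiliary class
\[
\beta_i^{S^1}=\prod_{j=0}^{i-1}\bigl(c_1^{S^1}-\Gamma_j\,x\bigr)\in H^{2i}_{S^1}(M;\Z),
\]
whose restriction to $P_m$ equals $\prod_{j=0}^{i-1}(\Gamma_m-\Gamma_j)\,x^i$. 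For $m<i$ the factor indexed by $j=m$ vanishes, so $\beta_i^{S^1}$ restricts to zero at $P_0,\dots,P_{i-1}$, while at $P_i$ it restricts to $\prod_{j=0}^{i-1}(\Gamma_i-\Gamma_j)\,x^i$. The triangular shape of the restriction data then forces $\beta_i^{S^1}=a_i\,\tau_i^{S^1}$ with $a_i=\prod_{j=0}^{i-1}(\Gamma_i-\Gamma_j)/\Lambda_i^-\in\Z$; comparing the two sides after setting $x=0$ gives $c_1^i=a_i\,\tau_i$, so that $\tau_i$ is the asserted normalized multiple of $c_1^i$ and generates $H^{2i}(M;\Z)$.

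The main obstacle is the input I am citing rather than reproving: the existence of the integral canonical classes $\tau_i^{S^1}$ with the prescribed triangular restriction data, which rests on the Morse theory of $\psi$ and is the real content behind the basis used in the last step. The second delicate point, easy to overlook, is the ordering $\psi(P_0)<\dots<\psi(P_n)$: it is genuinely needed both for the triangular solve and to ensure $\Gamma_i\neq\Gamma_j$, and its proof is precisely where the degree-two generation of $H^*(M;\R)$---equivalently $[\omega]^i\neq0$---enters, ruling out a critical point of large index lying below one of small index.
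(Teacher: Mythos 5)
Your route --- Kirwan/canonical classes with triangular restriction data, the auxiliary classes $\beta_i^{S^1}=\prod_{j<i}(c_1^{S^1}-\Gamma_j x)$, and the affine relation $\Gamma_j=s\,\psi(P_j)+t$ --- is essentially the argument behind the result that the paper does not prove itself but quotes from \cite{TolmanPetri} (Section 3), and most of your steps check out. However, there is one genuine gap, and it sits exactly at the point you yourself flag as delicate: the \emph{strict} ordering $\psi(P_0)<\dots<\psi(P_n)$. Your sublevel-set argument (surjectivity of $H^*(M;\R)\to H^*(M_{\le c};\R)$, generation in degree two, hence the indices of critical points below any regular value form an initial interval) only yields the weak statement: $\psi(P_i)<\psi(P_j)$ implies $i<j$. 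It is blind to the possibility that two fixed points of different index lie on the \emph{same} level set, say $\psi(P_1)=\psi(P_2)$: for every regular value $c$ the critical points below $c$ still have indices forming an initial interval, so no contradiction arises, and perfection of the Morse function cannot see whether two critical values coincide. Under such a tie your affine relation forces $\Gamma_1=\Gamma_2$; then your triangular solve gives $a_2=(\Gamma_2-\Gamma_0)(\Gamma_2-\Gamma_1)/\Lambda_2^-=0$, hence $c_1^2=0$, and the formula for $\tau_2$ in the statement is not even well defined. So distinctness of the $\Gamma_i$ is not a cosmetic point: it is part of what must be proved, and the Morse-theoretic induction you invoke does not prove it.

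The good news is that your own toolkit closes the gap. First, the canonical classes can be chosen triangular also across ties: the relative group $H^*_{S^1}(M_{\le c+\epsilon},M_{\le c-\epsilon};\Z)$ splits as a direct sum over the isolated critical points at level $c$, so one may demand $\tau_i^{S^1}|_{P_j}=0$ for all $j\neq i$ with $\psi(P_j)\leq\psi(P_i)$; with the weak ordering this gives $\tau_i^{S^1}|_{P_j}=0$ for all $j<i$. Now suppose $\Gamma_i=\Gamma_j$ for some $j<i$. Then $\beta_i^{S^1}$ restricts to zero not only at $P_0,\dots,P_{i-1}$ but also at $P_i$ (the factor indexed by $j$ kills it), and your expansion $\beta_i^{S^1}=\sum_{m\leq i}c_m x^{i-m}\tau_m^{S^1}$ together with the successive restrictions to $P_0,P_1,\dots,P_i$ forces every $c_m=0$, i.e.\ $\beta_i^{S^1}=0$; setting $x=0$ gives $c_1^i=0$ in $H^{2i}(M;\Z)$. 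But writing $c_1^{S^1}=a\hat\omega+bx$ in $H^2_{S^1}(M;\R)$, the inequality $\Gamma_0>0>\Gamma_n$ shows $a\neq0$, so $c_1=a[\omega]$ in $H^2(M;\R)$ and $c_1^i=a^i[\omega]^i\neq0$ --- a contradiction. Hence all $\Gamma_i$ (equivalently all moment values) are pairwise distinct, the weak ordering upgrades to the strict one, and the rest of your proof goes through as written.
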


\begin{remark}\label{Rem:ring}
Let \(\ham\) be a Hamiltonian \(S^1\)-space of dimension \(2n\) with \(n+1\) fixed points. Then \(M\) has the same (co-)homology groups as \(\C P^n\). Moreover,  the ring structure of \(H^*(M;\Z)\) can be easily recovered from the fixed point set data by using the results of  Lemma \ref{Lem:Tolman}.
\end{remark} 

The discussion in this remark, together with the results in Theorem \ref{Thm:Main1}, have the following straightforward consequence.
\begin{corollary}\label{Cor:Ring}
Let \(\ham\) be a Hamiltonian \(S^1\)-space of dimension \(2n\) which satisfies the hypotheses of Theorem \ref{Thm:Main1}. Then \(M\) has the same integer cohomology ring as \(\C P^n\).
\end{corollary}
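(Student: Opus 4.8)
The plan is to deduce the corollary formally from Theorem \ref{Thm:Main1} together with the principle, recorded in Remark \ref{Rem:ring}, that for a Hamiltonian $S^1$-space with exactly $n+1$ fixed points the entire ring structure of $H^*(M;\Z)$ is dictated by the fixed point data. The substantive work has already been carried out in Theorem \ref{Thm:Main1}; what remains is bookkeeping, so the corollary really is a ``straightforward consequence''.

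First I would invoke Theorem \ref{Thm:Main1}: under its hypotheses $M$ has the same Betti numbers as $\C P^n$, so $b_{2i}=1$ for $0\leq i\leq n$ and all odd Betti numbers vanish. By the Morse-theoretic count recalled above (the $2i$-th Betti number equals the number of fixed points with precisely $i$ negative weights), this forces $M^{S^1}=\{P_0,\dots,P_n\}$ with $P_i$ having exactly $i$ negative weights. Hence Lemma \ref{Lem:Tolman} applies and produces the explicit generators $\tau_i$ of $H^{2i}(M;\Z)=\Z$ as rational multiples of $c_1^i$, with coefficients assembled from the weight sums $\Gamma_i$ and the products $\Lambda_i^-$ of negative weights at the $P_i$.

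Next I would observe that the ring structure is a function only of these numerical invariants. Inverting the $i=1$ relation to write $c_1$ as a rational multiple of $\tau_1$ and substituting into the formula $\tau_i=\prod_{j=0}^{i-1}\frac{\Lambda_i^-}{\Gamma_i-\Gamma_j}c_1^i$ expresses each $\tau_i$ as an explicit rational multiple of $\tau_1^i$; thus $\tau_1$ generates $H^*(M;\Z)$ as a ring, and every product $\tau_i\tau_j$ decomposes in the basis $\{\tau_k\}$ with structure constants depending only on the $\Gamma$'s and $\Lambda^-$'s. Since the second assertion of Theorem \ref{Thm:Main1} identifies the weights at $P_0,\dots,P_n$ with those of a standard $S^1$-action on $\C P^n$, all of these invariants agree with their $\C P^n$ counterparts, the structure constants coincide, and $H^*(M;\Z)\cong\Z[t]/(t^{n+1})$.

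The only step requiring care is the integrality normalization: $\tau_1^i$ lies in $H^{2i}(M;\Z)=\Z$ and hence equals $c_i\tau_i$ for some integer $c_i$, and one must confirm $c_i=\pm1$ so that $\tau_1$ is a genuine ring generator rather than a proper multiple. This is automatic once the weights have been matched to those of $\C P^n$, since the very same computation yields $\pm1$ there. I expect this normalization to be the only real obstacle, and a mild one, all the classification content having been absorbed into Theorem \ref{Thm:Main1}.
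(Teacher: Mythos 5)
Your proposal is correct and follows exactly the route the paper intends: the paper states this corollary as a ``straightforward consequence'' of Remark \ref{Rem:ring} (the ring structure of $H^*(M;\Z)$ is determined by the fixed point data via Lemma \ref{Lem:Tolman}) combined with Theorem \ref{Thm:Main1} (the fixed point data coincide with those of a standard action on $\C P^n$), which is precisely your argument. Your write-up merely makes explicit the structure-constant bookkeeping and the $\pm1$ normalization that the paper leaves implicit.
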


This corollary is the key ingredient of the proof of Theorem \ref{Thm:Main2}.

\end{subsection}

        \begin{subsection}{Toric One-Skeletons}\label{Sec:toric}
In this section we review some material, which we adapt from \cite{new} and \cite{1224}.
                Let \(\ham\) be a Hamiltonian \(S^1\)-space with fixed point set \(M^{S^1}=\left\lbrace P_0,\dots, P_N\right\rbrace \). We denote the weights at the fixed point \(P_i\) by \(\omega_{i1},\dots, \omega_{in}\) (repeated with multiplicity). The multiset of positive weights \(W_+\) (resp. negative weights \(W_-\)) associated to \(\ham\) is the multiset\footnote{The symbol \(\biguplus \) denotes the union of multisets. }
                \begin{align*}
                \biguplus_{P_i \in M^{S^1}}\left\lbrace \omega_{ik}\,;\, \omega_{ik}>0\right\rbrace \quad ( \text{resp.}\biguplus_{P_i \in M^{S^1}}\left\lbrace \omega_{ik}\,;\, \omega_{ik}<0\right\rbrace ).
                \end{align*}
                
                The next lemma is the key ingredient behind the definition of a toric \(1-\)skeleton.
                
                \begin{lemma}\label{Lem:SymmetryWeights}
                        Let \(\ham\) be a  Hamiltonian \(S^1\)-space  and \(W_+\) and \(W_-\) be the multisets of positive and negative  weights. If an integer \(k>0\) belongs to \(W_+\) with multiplicity  \(m\) then \(-k\) belongs to \(W_-\) with the same multiplicity, i.e. \(W_+=-W_-\).
                \end{lemma}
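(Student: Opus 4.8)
The plan is to establish the multiset identity value by value. For a nonzero integer \(w\), let \(m_w:=\sum_{P_i\in M^{S^1}}\#\{\,j:\omega_{ij}=w\,\}\) be the total multiplicity with which \(w\) occurs as a weight over all fixed points. Then the assertion \(W_+=-W_-\) is equivalent to \(m_k=m_{-k}\) for every \(k>0\), and this is what I would prove. The starting point is to read off weights of a prescribed divisibility from the isotropy submanifolds. For \(k\geq 1\) let \(\Z_k\subset S^1\) be the cyclic subgroup of order \(k\) and set \(N_k:=M^{\Z_k}\). Since \(\Z_k\) acts symplectically, \(N_k\) is a (possibly disconnected) closed symplectic submanifold of \(M\), and as \(S^1\) is abelian it preserves \(N_k\). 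At a fixed point \(P_i\) the group \(\Z_k\) acts trivially on the weight space of \(\omega_{ij}\) exactly when \(k\mid\omega_{ij}\); hence \(T_{P_i}N_k\) is the sum of the weight spaces with \(k\mid\omega_{ij}\), and the \(S^1\)-weights of \(N_k\) at \(P_i\) are precisely those \(\omega_{ij}\) divisible by \(k\), with unchanged signs. Consequently, if I set \(D(k):=\sum_{l\geq 1}m_{lk}\) and \(D(-k):=\sum_{l\geq 1}m_{-lk}\), then \(D(k)\) is the total number of \emph{positive} \(S^1\)-weights of \(N_k\) and \(D(-k)\) is the total number of \emph{negative} \(S^1\)-weights of \(N_k\).

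The key intermediate step is the following balancing statement, which I would apply to each connected component of every \(N_k\): \emph{on any Hamiltonian \(S^1\)-space the total number of positive weights equals the total number of negative weights.} I would deduce this from Poincaré duality. Let \(X\) be such a component, of real dimension \(2d\), on which \(S^1\) acts nontrivially (a component with trivial action is a single fixed point carrying no weights, so it may be ignored). As recalled in the introduction, \(\psi|_X\) is a perfect Morse function whose critical points are the fixed points of \(X\) and whose index at \(P\) is \(2\lambda_P\), where \(\lambda_P\) is the number of negative weights at \(P\); hence \(\#\{P\in X^{S^1}:\lambda_P=i\}=b_{2i}(X)\). Therefore the total numbers of negative and of positive weights of \(X\) are \(\sum_{i=0}^{d} i\,b_{2i}(X)\) and \(\sum_{i=0}^{d}(d-i)\,b_{2i}(X)\), whose difference is \(\sum_{i=0}^{d}(2i-d)\,b_{2i}(X)\). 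Pairing the index \(i\) with \(d-i\) and using Poincaré duality \(b_{2i}(X)=b_{2d-2i}(X)\), each such pair contributes \((2i-d)b_{2i}(X)+(d-2i)b_{2i}(X)=0\), so the difference vanishes. Summing over the components of \(N_k\) yields \(D(k)=D(-k)\) for every \(k\geq 1\).

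To finish I would run a downward induction on \(k\). Since only finitely many weights occur, there is a largest \(K\) with \(m_K\neq 0\) or \(m_{-K}\neq 0\); for \(k>K\) both sides vanish. In general \(m_k=D(k)-\sum_{l\geq 2}m_{lk}\) and \(m_{-k}=D(-k)-\sum_{l\geq 2}m_{-lk}\); assuming inductively that \(m_{lk}=m_{-lk}\) for all \(l\geq 2\) (legitimate, as \(lk>k\)), the already proven equality \(D(k)=D(-k)\) forces \(m_k=m_{-k}\). This gives \(m_k=m_{-k}\) for all \(k>0\), i.e. \(W_+=-W_-\). The genuinely substantive input is the identification of the weights of \(N_k\) together with the balancing statement; the main obstacle I anticipate is purely one of care rather than depth, namely verifying that each connected component of \(N_k\) is indeed a compact connected symplectic submanifold with isolated fixed points so that the perfect-Morse-function property and Poincaré duality apply, and checking that trivial-action components contribute nothing to either count.
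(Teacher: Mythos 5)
Your proposal is correct, but it takes a genuinely different route from the paper, because the paper in fact gives no proof of Lemma \ref{Lem:SymmetryWeights} at all: it simply cites Hattori's Proposition 2.11, which establishes the symmetry of the weights in the larger generality of almost complex \(S^1\)-manifolds. Your argument --- identifying, for each \(k\ge 1\), the \(S^1\)-weights of the isotropy submanifold \(M^{\Z_k}\) at each fixed point with those weights of \(M\) divisible by \(k\); proving the balancing identity \(D(k)=D(-k)\) componentwise from the facts that the restricted moment map is a perfect Morse function with \(b_{2i}\) critical points of index \(2i\) and that Poincar\'e duality gives \(b_{2i}(X)=b_{2d-2i}(X)\); and then recovering \(m_k=m_{-k}\) by downward induction on \(k\) --- is a complete, self-contained proof in the Hamiltonian category, and all the points you flag as delicate (that components of \(M^{\Z_k}\) are closed symplectic \(S^1\)-invariant submanifolds with isolated fixed points, and that components with trivial action are single points contributing no weights) are resolved correctly. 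The trade-off is that your proof leans essentially on the Hamiltonian hypothesis, which is exactly what is unavailable in Hattori's setting, where no moment map exists and one must instead use equivariant index-theoretic/K-theoretic tools; conversely, since this paper only ever needs the statement for Hamiltonian \(S^1\)-spaces, your argument could legitimately replace the citation and makes the paper more self-contained, at the cost of proving a strictly less general statement than the one Hattori provides.
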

                
                This lemma was  proved  by Hattori \cite[Proposition 2.11]{hattori} for almost complex manifolds. In particular, there exists a bijection \(f\colon W_+\rightarrow W_-\) , such that
                \begin{align*}
                f(\omega_{ik})=\omega_{jl} \quad \text{implies} \quad \omega_{ik}=-\omega_{jl}.
                \end{align*}
 \begin{definition}
Let \(\ham\) be a Hamiltonian \(S^1\)-space. An  oriented graph \(\Gamma=(V,E)\) is associated to \(\ham\), if there exists a bijection \(f:W_+\rightarrow W_-\) as above, such that
                        \begin{align*}
                        &\cdot \text{The vertex set }V \text{ is the fixed point set }M^{S^1}.\\
                        &\cdot \text{The edge set is } E=\left\lbrace e_{ik} \, ;\, \omega_{ik} \in W_+ \right\rbrace ,\text { such that } e_{ik} \text{ is the oriented edge from } P_i \text{ to } P_j,\\
                        &\quad  \text{ where } f(\omega_{ik})=\omega_{jl}.\\
                        &\cdot \text{We label this graph by  }\,\operatorname{w}:E\rightarrow \Z_{>0}, \text{ where} \operatorname{w} \text{ sends an edge }e=e_{ik} \text{ to the weight }\\
                        &\quad \operatorname{w}(e)=\omega_{ik}. \text{ We call } \operatorname{w} \text{ the weight map.}
                        \end{align*}
                    \end{definition}

\begin{definition}\label{Def:toric1sk}
Let \(\ham\)  be a Hamiltonian \(S^1\)-space. We say  \(\ham\) \textbf{admits a toric \(1\)-skeleton} if there exists an oriented graph \(\Gamma=(V,E)\) associated to \(\ham\) satisfying the following property:
                        \begin{quote}
                        For each oriented edge \(e\in E\) from \(P_i\) to \(P_j\), labeled by \(\operatorname{w}(e)\in \Z_{>0}\), there exists a smoothly embedded, symplectic, \(S^1\)-invariant \(2\)-sphere fixed by \(\Z_{\operatorname{w}(e)}\). Moreover, \(S^1\) acts on this \(2\)-sphere with fixed points \(P_i\) and \(P_j\) and \(\operatorname{w}(e)\) resp. \(-\operatorname{w}(e)\) is the weight at \(S^1\)-representation at \(P_i\) resp. \(P_j\).
                        \end{quote}
                     Let \(\Gamma=(V,E)\) be such a graph and for each \(e\in E\) let \(S_e\) be a 2-sphere satisfying the properties above. Then \(\mathcal{S}=\left\lbrace S_e \mid e\in E\right\rbrace \) is \textbf{a toric \(1\)-skeleton of \(\ham\)} associated to \(\Gamma\).

               Moreover, the \textbf{equivariant pseudo-index}  of \(\tor\) is defined as
                       \begin{align*}
                       \rho_M^{S^1}(\mathcal{S}) =\min \left\lbrace c_1[S] \mid S \in \tor \right\rbrace .
                       \end{align*}
                    \end{definition}

                The importance of introducing the concept of toric \(1\)-skeletons relies in the following. The class in homology  of the toric \(1\)-skeleton is  the Poincar\'{e} dual of the Chern class \(c_{n-1}(M)\in H^{2n-2}(M;\Z)\) \cite[Lemma 4.13]{1224}. Moreover, the Chern number \(\int_M c_1c_{n-1}\) depends only on the Betti numbers of \(M\). 
                
                 \begin{proposition}\label{Prop:ChernBetti}\cite[Corollary 3.1]{new}
                	Let \(\ham \) be a Hamiltonian \(S^1\)-space of dimension \(2n\). Then the Chern number \(\int_M c_1c_{n-1}\) depends only the Betti numbers of \(M\). In particular,
                	\begin{align*}
                	\int_{M}c_1c_{n-1}=\sum_{k=0}^{n}\left[ 6k(k-1)+\dfrac{5n-3n^2}{2} \right] b_{2k}.
                	\end{align*}
                \end{proposition}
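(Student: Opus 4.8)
The plan is to compute $\int_M c_1 c_{n-1}$ from the fixed-point data by localization, and then to show that, for a Hamiltonian $S^1$-space, this data collapses to the Betti numbers via the Hirzebruch $\chi_y$-genus. First I would lift $c_1$ and $c_{n-1}$ to equivariant classes $c_1^{S^1}, c_{n-1}^{S^1}\in H^*_{S^1}(M;\Q)$ and apply the localization formula (Corollary \ref{Cor:ABBVisol}). At an isolated fixed point $P_i$ the equivariant total Chern class restricts to $\prod_{j=1}^n(1+\omega_{ij}x)$, so that $c_1^{S^1}(P_i)=\Gamma_i x$ and $c_{n-1}^{S^1}(P_i)=e_{n-1}(\omega_{i1},\dots,\omega_{in})\,x^{n-1}$, while the equivariant Euler class is $\big(\prod_j \omega_{ij}\big)x^n$. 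Since $\int_M c_1^{S^1}c_{n-1}^{S^1}=\int_M c_1 c_{n-1}$, the formula collapses to
\[
\int_M c_1 c_{n-1}=\sum_{P_i\in M^{S^1}}\Gamma_i\sum_{j=1}^n\frac{1}{\omega_{ij}}.
\]
This already expresses the invariant through the weights, but the right-hand side manifestly depends on the actual weights, not only on the index distribution; turning it into a statement about Betti numbers is the real content.

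To extract that dependence, I would pass to the $\chi_y$-genus $\chi_y(M)=\sum_{p=0}^n \chi^p y^p$, whose coefficients $\chi^p$ are Chern numbers by the Atiyah--Singer index theorem applied to the relevant $\bar\partial$-type operators on the almost complex manifold. The key structural input is the rigidity of the equivariant $\chi_y$-genus under the $S^1$-action (Hirzebruch): computed by the holomorphic Lefschetz/localization formula as a rational function of the equivariant parameter $t$, it is in fact constant. Evaluating that constant in the limit $t\to 0$, each fixed point $P_i$ with exactly $p$ negative weights contributes $(-1)^p y^p$ (with the appropriate conventions), so that
\[
\chi_y(M)=\sum_{p=0}^n (-1)^p N_p\, y^p,
\]
where $N_p$ is the number of fixed points with $p$ negative weights. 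Since the moment map is a perfect Morse function with only even-index critical points, $N_p=b_{2p}$ and the odd Betti numbers vanish; hence $\chi^p=(-1)^p b_{2p}$ depends only on the Betti numbers.

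Finally I would invoke the Libgober--Wood identity, a universal relation valid on any compact almost complex $2n$-manifold, which expresses $\int_M c_1 c_{n-1}$ as a linear combination $\sum_{p}a_{n,p}\,\chi^p$ with coefficients $a_{n,p}$ depending only on $n$ and $p$. Substituting $\chi^p=(-1)^p b_{2p}$ and simplifying the resulting coefficients yields
\[
\int_M c_1 c_{n-1}=\sum_{k=0}^n\Big[\,6k(k-1)+\tfrac{5n-3n^2}{2}\,\Big]\,b_{2k}.
\]
As a consistency check, on $\C P^n$ one has $b_{2k}=1$ for every $k$, and both the stated sum and the direct evaluation $\int_{\C P^n}c_1 c_{n-1}$ equal $\tfrac{n(n+1)^2}{2}$.

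The hard part will be the middle step: establishing that the index-theoretic numbers $\chi^p$ coincide with the signed Betti numbers $(-1)^p b_{2p}$. This is exactly where the special geometry of Hamiltonian $S^1$-spaces enters, through the rigidity of the $\chi_y$-genus together with the perfectness of the moment-map Morse function (which identifies $N_p$ with $b_{2p}$ and forces the odd Betti numbers to vanish). By contrast, the localization computation of the first paragraph and the Libgober--Wood reduction are formal once the relevant characteristic-class identities are recorded, and the final matching of coefficients is a routine symmetric-function computation.
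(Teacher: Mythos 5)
Your proposal is correct and follows essentially the same route as the proof in the source this paper cites for the statement (the paper gives no proof of its own, quoting \cite[Corollary 3.1]{new}): Godinho--Sabatini's argument is precisely your combination of rigidity of the equivariant \(\chi_y\)-genus (giving \(\chi^p=(-1)^pN_p\), valid for any almost complex \(S^1\)-manifold with isolated fixed points), the Libgober--Wood identity extended formally to almost complex manifolds, and the perfectness of the moment map as a Morse function (giving \(N_p=b_{2p}\)). Your opening localization formula \(\int_M c_1c_{n-1}=\sum_i\Gamma_i\sum_j\omega_{ij}^{-1}\) is correct but, as you note, plays no role in the main line of the argument.
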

           The  following corollary of Proposition \ref{Prop:ChernBetti} gives us relations between the Betti numbers of \(M\) and the equivariant pseudo-index.

                \begin{corollary} \label{Cor:Cinteger}(c.f. \cite[Corollary 5.5]{1224})
                        Let  \((M,\omega ,\psi) \) be a Hamiltonian \(S^1\)-space of dimension \(2n\) and let \(\textbf{b}=(b_0,b_2,\dots ,b_{2n})\) be the vector of even Betti numbers of \(M\). Assume that \(\ham\) admits a toric \(1\)-skeleton \(\mathcal{S} \) with equivariant pseudo-index \(\rho:=\rho_M^{S^1}(\mathcal{S})\). Consider the integer \(\mathcal{C}(\rho,n, \textbf{b})\), defined as,
                        \[
                        \mathcal{C}(\rho,n, \textbf{b})=\begin{cases}

                        \sum_{k=1}^{\frac{n}{2}}\left[ 12k^2-n(\rho+1)\right]b_{n-2k}
                        -\dfrac{n}{2}(\rho +1)b_n,

                        & \text{for \(n\) even,} \\
                        \sum_{k=1}^{\frac{n-1}{2}}\left[ 12k(k+1)+3-n(\rho+1)\right]b_{n-1-2k}
                        -\left[ n(\rho+1)-3\right] b_{n-1},& \text{for  n is odd. }
                        \end{cases}
                        \]
                        It is  non-negative and vanishes if and only if \(c_1[S]=\rho\)  for all \(S \in \mathcal{S}\).
                \end{corollary}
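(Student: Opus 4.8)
The plan is to recognize $\mathcal{C}(\rho,n,\textbf{b})$ as the integer $\int_M c_1c_{n-1}-\rho\cdot|E|$, where $|E|$ is the number of edges of the graph $\Gamma$ associated to $\mathcal{S}$, and then to deduce both the non-negativity and the equality criterion from elementary properties of the toric $1$-skeleton, leaving only a bookkeeping computation to match the stated closed form.

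First I would use the fact, recalled just before Proposition \ref{Prop:ChernBetti}, that the homology class $\sum_{e\in E}[S_e]$ of the toric $1$-skeleton is Poincar\'e dual to $c_{n-1}$. Pairing with $c_1$ gives
\[
\int_M c_1c_{n-1}=\sum_{e\in E}c_1[S_e].
\]
Since $\rho=\min\{c_1[S]\mid S\in\mathcal{S}\}$, every summand satisfies $c_1[S_e]\geq\rho$, whence $\int_M c_1c_{n-1}\geq\rho\cdot|E|$, with equality exactly when $c_1[S]=\rho$ for all $S\in\mathcal{S}$. This already yields the sign and the equality criterion, so it remains only to identify the difference with $\mathcal{C}(\rho,n,\textbf{b})$. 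To compute $|E|$ I would note that $E$ is indexed by the positive weights $W_+$, so $|E|=|W_+|$; a fixed point with $i$ negative weights contributes $n-i$ positive weights, and there are $b_{2i}$ such fixed points by the Morse-theoretic description of the Betti numbers recalled in the introduction. Hence $|E|=\sum_{k=0}^n(n-k)b_{2k}$, and substituting this together with Proposition \ref{Prop:ChernBetti} gives
\[
\int_M c_1c_{n-1}-\rho\,|E|=\sum_{k=0}^n\Bigl[\,6k(k-1)+\tfrac{5n-3n^2}{2}-\rho(n-k)\,\Bigr]b_{2k},
\]
an integer since both terms on the left are integers.

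The final and most computational step is to match this sum with the two-case definition of $\mathcal{C}(\rho,n,\textbf{b})$. Here I would invoke Poincar\'e duality $b_{2k}=b_{2(n-k)}$ and fold the sum onto its lower half, pairing the coefficient of $b_{2k}$ with that of $b_{2(n-k)}$. A direct calculation shows that the paired coefficient of $b_{2k}$ equals $12k^2-12nk+3n^2-n-\rho n$; after the substitution $k=\tfrac{n}{2}-j$ (for $n$ even) this collapses to $12j^2-n(\rho+1)$, while the unpaired central term at $k=\tfrac n2$ contributes the isolated summand $-\tfrac n2(\rho+1)b_n$. For $n$ odd the substitution $k=\tfrac{n-1}{2}-l$ turns the paired coefficient into $12l(l+1)+3-n(\rho+1)$, with the $l=0$ term producing the isolated summand $-[\,n(\rho+1)-3\,]b_{n-1}$. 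These reproduce the even and odd branches of the definition of $\mathcal{C}$ verbatim.

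The main obstacle is purely organizational: arranging the pairing and reindexing so that the single central term (for $n$ even) and the two equal central terms (for $n$ odd) are accounted for exactly once and with the correct coefficients. The conceptual content is entirely carried by the two inputs already available, namely that the $1$-skeleton is Poincar\'e dual to $c_{n-1}$ and the Chern-number formula of Proposition \ref{Prop:ChernBetti}; everything else is elementary algebra.
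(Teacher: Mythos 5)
Your proposal is correct and follows essentially the same route as the paper: both rest on the Poincar\'e duality of the toric $1$-skeleton to $c_{n-1}$, the inequality $c_1[S]\geq\rho$ with equality exactly when all spheres attain the minimum, and Proposition \ref{Prop:ChernBetti}, followed by a Poincar\'e-duality folding of the resulting sum. The only cosmetic difference is that you count $|E|=\sum_k(n-k)b_{2k}$ fixed-point by fixed-point, whereas the paper writes $|\mathcal{S}|=\frac{n}{2}\chi(M)$ via the symmetry of the weights; these agree, and your explicit verification of the folding algebra is a correct elaboration of what the paper leaves implicit.
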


            \begin{proof}
            The cardinality of \(\mathcal{S}\)  is equal to \(\frac{n}{2} \left|  M^{S^1} \right|   = \frac{n}{2} \chi(M)\), where \(\chi(M)\) is the Euler characteristic of \(M\). Thus
            	\begin{align}\label{Eq:C} 
            	\sum_{S\in \mathcal{S}}c_1[S]-\rho\dfrac{n}{2} \chi (M)
            	\end{align}
            	is  non-negative  and it is zero if and only if \(c_1[S]=\rho \) for all \(S \in \mathcal{S}\).
            	For \(k\) odd \(b_k\) is equal to \(0\) and the Poincar\'e duality implies \(b_{2k}=b_{2(n-k)}\),  so
            	\begin{align*}
            	\chi(M)= \begin{cases}
            	2\sum_{k=1}^{\frac{n}{2}}b_{n-2k} +b_n,
            	& \text{for \(n\) even}, \\
            	2\sum_{k=1}^{\frac{n-1}{2}}b_{n-1-2k}, & \text{for  n is odd. }
            	\end{cases}
            \end{align*}
            Moreover,
            \begin{align*}
            \sum_{S\in \mathcal{S}}c_1[S]= \int_ M c_1c_{n-1},
            \end{align*}
            since \(\mathcal{S}\) is Poincar\'e dual to the Chern class \(c_{n-1}\).
            	By using Proposition \ref{Prop:ChernBetti} we conclude that \eqref{Eq:C} is equal to \(\mathcal{C}(\rho,n,\textbf{b})\) and the claim follows.
            \end{proof}

            \end{subsection}
            \end{section}

           \begin{section}{Upper Bounds for the Equivariant Pseudo-Index}
           	In this section we prove upper bounds for the equivariant pseudo-index of a toric \(1\)-skeleton.

           	\begin{lemma}\label{Lem:pi}
           		Let \(\ham\)  be a Hamiltonian \(S^1\)-space of dimension \(2n\) which admits a toric \(1\)-skeleton \(\mathcal{S}\). Then the equivariant pseudo-index \(\rho_M^{S^1}(\mathcal{S})\) of \(\mathcal{S} \) satisfies \(\rho_M^{S^1}(\mathcal{S})\leq 2n\).
           	\end{lemma}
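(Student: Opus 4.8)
The plan is to compute $c_1[S_e]$ explicitly for each edge sphere of the $1$-skeleton and then to select the edge carrying the largest possible weight, where the bound becomes transparent.

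First I would compute, for an edge $e\in E$ oriented from $P_i$ to $P_j$ with label $\operatorname{w}(e)=w>0$, the Chern number $c_1[S_e]$ by applying the localization formula (Theorem \ref{Thm:ABBV}) to the invariant $2$-sphere $S_e$ itself. Write $\Gamma_\ell:=\sum_{k=1}^n \omega_{\ell k}$ for the sum of the weights at a fixed point $P_\ell$, as in Lemma \ref{Lem:Tolman}. The restriction of the equivariant first Chern class $c_1^{S^1}(TM)$ to $P_i$ (resp. $P_j$) is $\Gamma_i x$ (resp. $\Gamma_j x$), while the equivariant Euler class $e^{S^1}(TS_e)$ at $P_i$ (resp. $P_j$) is $wx$ (resp. $-wx$), since by the definition of the associated graph the weight of the $S^1$-action on $S_e$ is $+w$ at $P_i$ and $-w$ at $P_j$. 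As $S_e$ is two-dimensional, the equivariant pushforward of the degree-two class $i_{S_e}^*c_1^{S^1}(TM)$ lands in $H^0(\C P^\infty)=\Q$ and equals the ordinary integral $c_1[S_e]$. The localization formula then yields
\begin{align*}
c_1[S_e]=\frac{\Gamma_i x}{wx}+\frac{\Gamma_j x}{-wx}=\frac{\Gamma_i-\Gamma_j}{w}.
\end{align*}

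Next I would exploit the weight symmetry. Let $w_{\max}:=\max W_+$ be the largest positive weight occurring anywhere in the fixed point data, and let $e_0\in E$ be an edge with $\operatorname{w}(e_0)=w_{\max}$; such an edge exists because $E$ is indexed by $W_+$. By Lemma \ref{Lem:SymmetryWeights} we have $W_+=-W_-$, so every weight $\omega_{\ell k}$ at every fixed point satisfies $|\omega_{\ell k}|\le w_{\max}$. Consequently the sum of the $n$ weights at any fixed point obeys $-n\,w_{\max}\le \Gamma_\ell \le n\,w_{\max}$. Applying the formula above to $e_0$ (with endpoints $P_i,P_j$) gives
\begin{align*}
c_1[S_{e_0}]=\frac{\Gamma_i-\Gamma_j}{w_{\max}}\le \frac{n\,w_{\max}-(-n\,w_{\max})}{w_{\max}}=2n,
\end{align*}
and hence $\rho_M^{S^1}(\mathcal{S})=\min_{S\in\mathcal{S}}c_1[S]\le c_1[S_{e_0}]\le 2n$, which is the claim.

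The only real (and minor) obstacle is the careful justification of the Chern-number formula: one must correctly identify the restrictions of $c_1^{S^1}(TM)$ and of the equivariant Euler class $e^{S^1}(TS_e)$ at the two poles, track the sign arising from the opposite weights $+w$ and $-w$, and justify that the equivariant pushforward along the $2$-sphere computes the ordinary Chern number $c_1[S_e]$. Once this formula is in place the estimate is immediate, and I would emphasize that it requires no hypothesis on the Betti numbers of $M$, in contrast with the sharper bound of Proposition \ref{Pro:boundpi}.
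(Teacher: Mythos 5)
Your proof is correct and follows essentially the same route as the paper's: both select the sphere carrying the maximal positive weight $\alpha=\max W_+$, invoke Lemma \ref{Lem:SymmetryWeights} to bound all weights by $\alpha$ in absolute value, and apply the ABBV formula on that sphere to get $c_1[S]=(\Gamma_i-\Gamma_j)/\alpha$. The only difference is that you bound each $\Gamma_\ell$ crudely by $\pm n\alpha$, whereas the paper uses the slightly sharper estimate $(n-\lambda_\ell)-\alpha\lambda_\ell\le\Gamma_\ell\le\alpha(n-\lambda_\ell)-\lambda_\ell$; both yield the same bound $2n$.
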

           	
           	\begin{proof}
           		Let \(M^{S^1}=\left\lbrace P_0,\dots,P_N\right\rbrace \) be the set of fixed points, where \(P_0\) resp. \(P_N\) is the minimum resp. the maximum of the moment map \(\psi\).
           		
           		For any \(i=0,\dots,N\) let \(\Gamma_i\) be the sum of the weights at the fixed point \(P_i\) and let \(\lambda_i\) be the number of negative weights at the fixed point \(P_i\). Note that \(\lambda_0=0\),  \(\lambda_N=n\) and \(0<\lambda_i<n\) for \(i\neq0,N\). \\
           		Now let \(W_+\) resp. \(W_-\) be the multiset of  all positive  resp. negative weights of the \(S^1\)-action. We set \(\alpha=\text{max}\left\lbrace \omega_{ij}\mid \omega_{ij}\in W_+\right\rbrace \). By Lemma \ref{Lem:SymmetryWeights} we have \(-\alpha =\text{min}\left\lbrace \omega_{ij}\mid \omega_{ij}\in W_-\right\rbrace \). In particular, \(1\leq \omega_{ij}\leq\alpha \) if \(\omega_{ij}\in W_+\) and \(-\alpha\leq \omega_{ij}\leq -1\) if \(\omega_{ij}\in W_-\). We conclude
           		\begin{align*}
           		(n-\lambda_i)-\alpha \lambda_i \leq \Gamma_i \leq \alpha(n-\lambda_i) -\lambda_i,
           		\end{align*}
           		for all \(i=0,\dots,N\).\\
           		Moreover, there exists a fixed point \(P_k\in M^{S^1}\) such that \(\alpha\) is a weight at \(P_k\). Let \(S \) be the corresponding \(2\)-sphere in the toric \(1\)-skeleton \(\mathcal{S}\), i.e. \(S^1\) acts on \(S\) with fixed points \(P_k\) and \(P_j\) and \(\alpha\) resp. \(-\alpha\) is the weight of the \(S^1\)-representation on \(T_{P_k}S\) resp. \(T_{P_j}S\). The ABBV formula gives us
           		\begin{align*}
           		c_1\left[ S\right] &= \dfrac{c_1^{S^1}(P_k)-c_1^{S^1}(P_j)}{\alpha}=\dfrac{\Gamma_k-\Gamma_j}{\alpha}\\
           		&\leq \dfrac{\alpha(n-\lambda_k) -\lambda_k+\alpha\lambda_j-(n-\lambda_j)}{\alpha}\\
           		&= n+ \lambda_j - \lambda_k + \frac{\lambda_j-\lambda_k-n}{\alpha}\leq 2n,
           		\end{align*}
           		and the claim  follows.
           	     \end{proof}

           	 Recall the definition of the index of an almost complex manifold.
                   \begin{definition}
                   Let \((M,J)\) be a compact connected  almost complex manifold and \(c_1\) be its first Chern class. The index \(k_0\) of \((M,J)\)  is the largest integer, such that
                   \begin{align*}
                   c_1=k_0 \eta \quad \text{modulo torsion}
                   \end{align*}
                   for some non-torsion element \(\eta \) in \(H^2(M;\Z)\).
                   \end{definition}

                   In \cite{sabatinichern}, Sabatini proves the following result.

                   \begin{proposition}\label{Pro:Sabatini}
            Let \(\ham\) be a Hamiltonian \(S^1\)-space of dimension \(2n\), then the index \(k_0\) satisfies the following inequalities
            \begin{align*}
            1\leq k_0 \leq n+1.
            \end{align*}
                   \end{proposition}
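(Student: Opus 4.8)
The plan is to translate the divisibility definition of $k_0$ into a statement about the restrictions of the equivariant first Chern class at the fixed points, and to treat the two inequalities separately. Throughout I will use that, since the moment map $\psi$ is a Morse function all of whose critical points have even index, $M$ is equivariantly formal and $H^*(M;\Z)$ is free and concentrated in even degrees; in particular $H^2(M;\Z)$ is torsion free, so writing $c_1=k_0\,\eta$ with $\eta$ a primitive non-torsion class is unambiguous, and $k_0$ is exactly the greatest common divisor of the integers $c_1[A]$, $A\in H_2(M;\Z)$. Equivalently, $k_0$ divides $c_1[A]$ for every $A$ and $k_0=\min\{\,|c_1[A]|:c_1[A]\neq 0\,\}$.

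For the lower bound I would show that $c_1\neq 0$. Equivariant formality gives a splitting $H^2_{S^1}(M;\Z)\cong H^2(M;\Z)\oplus\Z\,x$, so if $c_1=0$ in $H^2(M;\Z)$ then $c_1^{S^1}=c\,x$ for a single constant $c$, whence $\Gamma_i=c$ at every fixed point $P_i$ (recall $c_1^{S^1}(P_i)=\Gamma_i x$, as used in the proof of Lemma \ref{Lem:pi}). This contradicts the fact that at the minimum $P_0$ all $n$ weights are positive, so $\Gamma_0\geq n>0$, while at the maximum $P_N$ all weights are negative, so $\Gamma_N\leq -n<0$. Hence $c_1\neq 0$ and $k_0\geq 1$.

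For the upper bound I would pick an $S^1$-equivariant lift $\tilde\eta\in H^2_{S^1}(M;\Z)$ of $\eta$ and write its restrictions as $\tilde\eta(P_i)=\eta_i x$ with $\eta_i\in\Z$. Comparing with $c_1^{S^1}(P_i)=\Gamma_i x$ and using the same splitting, the class $c_1^{S^1}-k_0\tilde\eta$ lies in $\Z\,x$, so $\Gamma_i=k_0\,\eta_i+c$ for one constant $c$; in particular $k_0$ divides $\Gamma_i-\Gamma_j$ for every pair of fixed points. A convenient sufficient condition is then to exhibit two fixed points $P_i,P_j$ with $0<\Gamma_i-\Gamma_j\leq n+1$, since a positive divisor of this nonzero integer is at most $n+1$. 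To control the $\eta_i$ (and hence the $\Gamma_i$) I would feed their integrality into the ABBV relations coming from Corollary \ref{Cor:ABBVisol} applied to $\tilde\eta^{\,j}$ for $0\leq j\leq n-1$, which force $\sum_i \eta_i^{\,j}/\!\prod_k \omega_{ik}=0$, and combine these with the sign structure of the weights: all positive at $P_0$, all negative at $P_N$, and at an intermediate fixed point of index $2\lambda_i$ the two-sided bounds on $\Gamma_i$ used in the proof of Lemma \ref{Lem:pi}.

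The main obstacle is precisely this last step: ruling out that the $\Gamma$-values are spread apart by gaps all strictly larger than $n+1$, i.e. showing the divisibility enforced by $k_0$ is incompatible with $k_0>n+1$. This is the symplectic counterpart of Mori's bound on the (pseudo-)index of a Fano variety, where a rational curve of small anticanonical degree through a general point realizes a class $A$ with $0<c_1[A]\leq n+1$; here no such curve is available a priori, so its role must be played by the extremal fixed-point data together with the localization relations. I expect the delicate point to be controlling the restrictions $\eta_i$ at the fixed points of intermediate index, since the model case $\C P^n$, where all consecutive $\Gamma$-differences equal exactly $n+1$, shows the bound is sharp and leaves no slack.
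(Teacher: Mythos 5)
First, a point of comparison: the paper contains no proof of Proposition \ref{Pro:Sabatini} at all --- it is quoted from \cite{sabatinichern}, where the upper bound is obtained with genuinely different machinery. There one considers the Hilbert polynomial of the primitive class \(\eta\) (with \(c_1=k_0\eta\)), i.e.\ the degree-\(n\) polynomial \(m\mapsto\int_M e^{m\eta}\operatorname{Td}(M)\); it equals the Todd genus \(1\) at \(m=0\), and equivariant index theory (K-theoretic localization, the symplectic surrogate for the rational curves in Mori's argument) forces it to vanish at \(m=-1,\dots,-(k_0-1)\). A nonzero polynomial of degree \(n\) has at most \(n\) roots, whence \(k_0\le n+1\). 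Your lower-bound argument is correct: \(\Gamma_0\ge n>0>\Gamma_N\) shows \(c_1^{S^1}\) is not a multiple of \(x\), so \(c_1\neq0\), and since \(H^2(M;\Z)\) is torsion free this gives \(k_0\ge1\); but that is the soft half of the statement.

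The upper bound has a genuine gap, which you acknowledge, and it is worse than a missing computation: the sufficient condition you reduce to is false. You propose to exhibit two fixed points with \(0<\Gamma_i-\Gamma_j\le n+1\). Such a pair need not exist. Take \(\C P^2\) with the effective Hamiltonian action \(\lambda\cdot[z_0,z_1,z_2]=[z_0,\lambda^2z_1,\lambda^5z_2]\): the weights at the three fixed points are \(\{2,5\}\), \(\{-2,3\}\) and \(\{-5,-3\}\), so \((\Gamma_0,\Gamma_1,\Gamma_2)=(7,1,-8)\), and every positive difference (\(6\), \(9\), \(15\)) strictly exceeds \(n+1=3\), even though \(k_0=3\). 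So what must be bounded by \(n+1\) is not the minimal positive \(\Gamma\)-difference but \(k_0\) itself, a divisor of \(\gcd\{\Gamma_i-\Gamma_j\}\) (here \(\gcd(6,9,15)=3\)), and nothing in your outline --- the ABBV relations for powers of \(\tilde\eta\), the sign pattern of the weights --- is actually deployed to control that gcd; your own text flags this as ``the main obstacle.'' That obstacle is the entire content of the proposition, and closing it is precisely what the vanishing theorem for the Hilbert polynomial in \cite{sabatinichern} accomplishes; cohomological localization alone, as used elsewhere in this paper (e.g.\ in Lemma \ref{Lem:pi}), only yields the weaker bound \(\rho_M^{S^1}(\mathcal{S})\le 2n\) on spheres that are actually present, not the divisibility bound on \(c_1\) over all of \(H_2(M;\Z)\).
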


               Obviously, the equivariant pseudo-index of a toric \(1\)-skeleton must be a multiple of the index. Thus, from   Lemma \ref{Lem:pi} and Proposition \ref{Pro:Sabatini} we obtain the following corollary. 
               
               \begin{corollary} Let \(\ham \) be a Hamiltonian \(S^1\)-space of dimension \(2n\)  which admits a toric \(1\)-skeleton \(\mathcal{S}\) with equivariant pseudo-index \(\rho_M^{S^1}(\mathcal{S})>0\). Moreover, let  \(k_0\) be the index of \((M,\omega)\). Under these assumptions \(k_0=n+1\) implies  \(k_0=\rho_M^{S^1}(\mathcal{S})\).
               \end{corollary}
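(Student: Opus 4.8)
The plan is to turn the sentence preceding the corollary --- that the equivariant pseudo-index must be a multiple of the index --- into a precise statement, and then combine it with the two bounds already in hand: the positivity hypothesis $\rho_M^{S^1}(\mathcal{S})>0$ and the inequality $\rho_M^{S^1}(\mathcal{S})\leq 2n$ from Lemma \ref{Lem:pi}. The decisive arithmetic observation is that, in the range $(0,2n]$, the only multiple of $n+1$ is $n+1$ itself, since $2(n+1)=2n+2>2n$.

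First I would establish the divisibility claim carefully. Fix any $S\in\mathcal{S}$ and regard its (symplectic, hence canonically oriented) fundamental class as an element $[S]\in H_2(M;\Z)$. By definition of the index there is a non-torsion class $\eta\in H^2(M;\Z)$ and a torsion class $t\in H^2(M;\Z)$ with $c_1=k_0\,\eta+t$. Evaluating on $[S]$ yields $c_1[S]=k_0\,\eta[S]+t[S]$. Since the evaluation pairing $H^2(M;\Z)\times H_2(M;\Z)\to\Z$ takes values in the torsion-free group $\Z$, every torsion class pairs to zero: if $m\,t=0$ for some $m>0$, then $m\cdot t[S]=(m\,t)[S]=0$, whence $t[S]=0$. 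Therefore $c_1[S]=k_0\,\eta[S]$ is an integer multiple of $k_0$. As this holds for each sphere of the toric $1$-skeleton, the minimum $\rho_M^{S^1}(\mathcal{S})=\min_{S\in\mathcal{S}}c_1[S]$ is itself a multiple of $k_0$.

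Next I would specialize to $k_0=n+1$. The hypothesis $\rho_M^{S^1}(\mathcal{S})>0$ forces $\rho_M^{S^1}(\mathcal{S})$ to be a strictly positive multiple of $n+1$, so $\rho_M^{S^1}(\mathcal{S})\geq n+1$. On the other hand, Lemma \ref{Lem:pi} gives $\rho_M^{S^1}(\mathcal{S})\leq 2n<2(n+1)$. The only multiple of $n+1$ lying in the interval $[n+1,2n]$ is $n+1$, so $\rho_M^{S^1}(\mathcal{S})=n+1=k_0$, which is exactly the assertion.

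The computation is short, so I do not anticipate a genuine obstacle; the one point that deserves care is the passage from ``$c_1=k_0\eta$ modulo torsion'' to ``$c_1[S]$ is divisible by $k_0$'', which is precisely where the vanishing of the torsion part under evaluation against a homology class is used. Everything beyond that is the elementary observation that $2(n+1)$ already exceeds the upper bound $2n$, leaving $n+1$ as the unique admissible value.
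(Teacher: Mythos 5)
Your proof is correct and follows exactly the route the paper intends: the paper states the corollary as an immediate consequence of the remark that $\rho_M^{S^1}(\mathcal{S})$ is a multiple of $k_0$ together with the bound $\rho_M^{S^1}(\mathcal{S})\leq 2n$ from Lemma \ref{Lem:pi}, which is precisely your argument. Your only addition is to spell out the details the paper leaves implicit, namely that torsion classes pair trivially with $[S]$ so each $c_1[S]$ is genuinely divisible by $k_0$, and the final arithmetic that $n+1$ is the only multiple of $n+1$ in $(0,2n]$.
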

           
           Moreover, if we assume that the vector of even Betti numbers of \(M\) is unimodal, we obtain a stronger upper bound for the equivariant  pseudo-index (see Proposition \ref{Pro:boundpi}). The proof of this proposition  is very similar to that of \cite[Corollary 5.8]{1224} and it is recalled here for the sake of completeness.

                   \begin{proof}[Proof of Proposition \ref{Pro:boundpi}] 
                   	  Let \(\ham\) be a Hamiltonian \(S^1\)-space of dimension \(2n\) which admits a toric \(1\)-skeleton \(\mathcal{S}\) with equivariant pseudo-index \(\rho:=\rho_M^{S^1}(\mathcal{S})\).
                   	We can see  \(\mathcal{C}(\rho,n,\textbf{b})\) (as in Corollary \ref{Cor:Cinteger}) as a linear function of the vector  \(\textbf{b}\) of the even Betti numbers of \(M\)
                   	\begin{align*}
                   	\mathcal{C}(\rho\,,n,\textbf{b})=\sum_{i=0}^{\lfloor n / 2 \rfloor}A_i(\rho,n)b_{2i}.
                   	\end{align*}
                   	Let
                   	\begin{align*}
                   	M(\rho, n)\colon= \sum_{i=0}^{\lfloor n / 2 \rfloor} A_i(\rho,n)= \frac{1}{2}n (n+1)(n+1-\rho)
                   	\end{align*}

                   	and note that
                   	\begin{align*}\label{Eq:A}
                   	A_{\lfloor n / 2 \rfloor-1}(\rho,n)<\dots<A_0(\rho,n)
                   	\end{align*}

                   	and \(A_{\lfloor n / 2 \rfloor}(\rho,n)\leq 0\) if \(\rho\geq 2\). Moreover, let
                   	\begin{align*}
                   	\lambda\colon = \lambda (\rho, n)=
                   	\min \left\lbrace i\in \left\lbrace 0,1,\dots, \lfloor n / 2 \rfloor\right\rbrace \mid A_i(\rho,n)\leq 0 \right\rbrace.
                   	\end{align*}
                   	Now assuming that \(\rho>n+1\) and that the vector of even Betti numbers is unimodal, i.e.  \(1=b_0\leq\dots\leq b_{2 \lfloor n/2 \rfloor}\), we have
                   	\begin{align*}
                   	\mathcal{C}(\rho\,,n,\textbf{b})&=\sum_{i=0}^{\lfloor n / 2 \rfloor}A_i(\rho,n)b_{2i}\\
                   	&= \sum_{i=0}^{\lambda-1}A_i(\rho,n)b_{2i} +\sum_{i=\lambda}^{\lfloor n / 2 \rfloor}A_i(\rho,n)b_{2i}\\
                   	&\leq b_{2\lambda} \sum_{i=0}^{\lfloor n / 2 \rfloor}A_i(\rho,n) = b_{2\lambda} M(\rho, n)\\
                   	&= b_{2\lambda} \frac{1}{2}n (n+1)(n+1-\rho)<0.
                   	\end{align*}
                   	
                   	Thus \(\mathcal{C}(\rho\,,n,\textbf{b})<0\) if the vector of even Betti numbers of \(M\) is unimodal and \(\rho>n+1\). This contradicts  Corollary \ref{Cor:Cinteger}.
                           \end{proof}

           \end{section}

            \begin{section}{The Weights: Proof of Theorem 1.6 }
        	In this section we prove  Theorem \ref{Thm:Main1}.
        	
        	The following corollary follows from Corollary \ref{Cor:Cinteger}. The proof is very similar to the one of  \cite[Corollary 5.7 and 5.8]{1224}.
                
             \begin{corollary}\label{Cor:Bettino}
             Let \(\ham\) be a Hamiltonian \(S^1\)-space of dimension \(2n\), which admits a toric \(1\)-skeleton \(\mathcal{S}\) with equivariant pseudo-index \(\rho_M^{S^1}(\mathcal{S})=n+1\). Assume that \(n\leq 5\) or that the vector of even Betti numbers of \(M\) is unimodal. Then \(M\) has the same Betti numbers as \(\C P^n\).
             \end{corollary}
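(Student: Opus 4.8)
The plan is to extract everything from the non-negativity of the integer $\mathcal{C}(\rho,n,\mathbf{b})$ of Corollary \ref{Cor:Cinteger}, evaluated at the extremal value $\rho=n+1$, where the total mass of its coefficients vanishes. Writing $\mathcal{C}(n+1,n,\mathbf{b})=\sum_{i=0}^{\lfloor n/2\rfloor}A_i(n+1,n)\,b_{2i}$ as in the proof of Proposition \ref{Pro:boundpi}, the crucial input is $\sum_{i=0}^{\lfloor n/2\rfloor}A_i(n+1,n)=M(n+1,n)=\tfrac12 n(n+1)(n+1-\rho)=0$, together with the sign data already recorded there: the strict chain $A_0>A_1>\dots>A_{\lfloor n/2\rfloor-1}$ and $A_{\lfloor n/2\rfloor}(n+1,n)<0$ (for $n\geq 2$; the case $n=1$ is $\C P^1$ and trivial). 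Throughout I use that $b_0=1$ and $b_{2i}\geq 1$ for every $i$ (connectedness together with the non-vanishing of $\omega^i$), and Poincar\'e duality $b_{2i}=b_{2(n-i)}$, which reduces the problem to showing $b_{2i}=1$ for $0\leq i\leq \lfloor n/2\rfloor$.

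For the unimodal case I would set $\lambda=\min\{i : A_i(n+1,n)\leq 0\}$, so that $A_i>0$ for $i<\lambda$ and, by the strict monotonicity plus $A_{\lfloor n/2\rfloor}<0$, also $A_i<0$ for $i>\lambda$. Using $\sum_i A_i=0$, I rewrite $\mathcal{C}(n+1,n,\mathbf{b})=\sum_{i} A_i\,(b_{2i}-b_{2\lambda})$. Unimodality gives $b_{2i}-b_{2\lambda}\leq 0$ for $i\leq\lambda$ and $b_{2i}-b_{2\lambda}\geq 0$ for $i\geq\lambda$, so every summand $A_i(b_{2i}-b_{2\lambda})$ is $\leq 0$. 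Since $\mathcal{C}\geq 0$ by Corollary \ref{Cor:Cinteger}, each summand must vanish; because $A_i\neq 0$ for every $i\neq\lambda$, this forces $b_{2i}=b_{2\lambda}$ for all $i$, and evaluating at $i=0$ yields $b_{2\lambda}=b_0=1$. Hence all even Betti numbers equal $1$.

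For the low-dimensional case $n\leq 5$, where unimodality is not assumed, I would instead verify by direct evaluation of the explicit coefficients (a finite check over $n\in\{1,2,3,4,5\}$) that $A_0(n+1,n)>0$ while $A_i(n+1,n)<0$ for every $i\geq 1$. Substituting $b_0=1$ and $A_0=-\sum_{i\geq 1}A_i$ then gives $\mathcal{C}(n+1,n,\mathbf{b})=\sum_{i\geq 1}A_i(b_{2i}-1)\geq 0$; since each $A_i<0$ and each $b_{2i}-1\geq 0$, every term is $\leq 0$, so all terms vanish and $b_{2i}=1$ for all $i\geq 1$. Combined with $b_0=1$ and Poincar\'e duality, $M$ has the Betti numbers of $\C P^n$.

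I expect the main obstacle to be precisely the reason the two hypotheses are separated. For general $n$ one cannot hope that $A_0$ is the unique positive coefficient: several $A_i$ may be positive and some may even vanish (already at $n=6$ one has $A_1=0$, leaving $b_2$ unconstrained by $\mathcal{C}\geq 0$ alone). This is exactly where unimodality is needed, to concentrate the mass at the threshold index $\lambda$ and turn a single scalar inequality into equality constraints on the full vector $\mathbf{b}$. The delicate bookkeeping is confirming the strict sign pattern $A_i\neq 0$ for all $i\neq\lambda$, since it is this strictness (not merely $A_i\leq 0$) that makes the vanishing of each summand pin down every $b_{2i}$ rather than leaving some free.
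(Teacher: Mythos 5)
Your proof is correct and takes essentially the same route as the paper, which obtains this corollary from Corollary \ref{Cor:Cinteger} by the coefficient analysis of \(\mathcal{C}(\rho,n,\textbf{b})\) used in the proof of Proposition \ref{Pro:boundpi} (following \cite[Corollaries 5.7 and 5.8]{1224}): at \(\rho=n+1\) the coefficient sum \(M(n+1,n)\) vanishes, and the sign pattern of the \(A_i\) (threshold index \(\lambda\) under unimodality, a finite sign check for \(n\leq 5\)) forces every even Betti number to equal \(b_0=1\). Your remark that \(A_1(7,6)=0\) is exactly the obstruction past \(n=5\) also matches why the paper separates the two hypotheses.
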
  
        In order to describe the \(S^1\)-representations on \(TM\mid_{M^{S^1}}\) of a Hamiltonian \(S^1\)-space that satisfies the assumptions of Theorem \ref{Thm:Main1} we use ideas from Hattori's work \cite[Section 3 and 4]{hattori}. However, the existence of a toric \(1\)-skeleton simplifies the proof  (see Remark \ref{Rem:Hattori}).

                \begin{corollary}\label{Cor:allesn+1}
                Let \(\ham\) be a Hamiltonian \(S^1\)-space of dimension \(2n\), which admits a toric \(1\)-skeleton \(\mathcal{S}\) with equivariant pseudo-index \(\rho_M^{S^1}(\mathcal{S})=n+1\). If \(M\) has the same Betti numbers as \(\C P^n\), then \(c_1[S]=n+1\) for all \(S \in \mathcal{S}\).
                \end{corollary}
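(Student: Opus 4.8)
The plan is to read this off directly from the ``vanishes if and only if'' clause of Corollary \ref{Cor:Cinteger}. Since $\rho_M^{S^1}(\mathcal{S})=n+1$ is by definition the minimum of $c_1[S]$ over $S\in\tor$, every sphere already satisfies $c_1[S]\geq n+1$, so the content of the statement is that no sphere can have $c_1[S]$ strictly larger. Applying Corollary \ref{Cor:Cinteger} with $\rho=n+1$, the integer $\mathcal{C}(n+1,n,\textbf{b})$ is non-negative and equals $0$ precisely when $c_1[S]=n+1$ for all $S\in\tor$. Hence the whole claim reduces to the purely numerical assertion that $\mathcal{C}(n+1,n,\textbf{b})=0$ for the Betti vector of $M$.

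First I would record that, because $M$ has the same Betti numbers as $\C P^n$, its even Betti vector is $\textbf{b}=(1,1,\dots,1)$, i.e.\ $b_{2i}=1$ for all $0\leq i\leq n$ (the odd Betti numbers all vanish). Writing $\mathcal{C}(\rho,n,\textbf{b})=\sum_{i=0}^{\lfloor n/2\rfloor}A_i(\rho,n)\,b_{2i}$ as the linear function of $\textbf{b}$ with coefficients $A_i(\rho,n)$ introduced in the proof of Proposition \ref{Pro:boundpi}, the substitution $b_{2i}\equiv 1$ collapses $\mathcal{C}$ into the sum of its coefficients, which is exactly $M(\rho,n)=\sum_{i=0}^{\lfloor n/2\rfloor}A_i(\rho,n)=\tfrac{1}{2}n(n+1)(n+1-\rho)$.

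The evaluation then finishes the argument: setting $\rho=n+1$ gives $\mathcal{C}(n+1,n,\textbf{b})=M(n+1,n)=\tfrac{1}{2}n(n+1)\bigl((n+1)-(n+1)\bigr)=0$. (Equivalently, one may substitute $\rho=n+1$ and all $b_{2i}=1$ directly into the two case-formulae of Corollary \ref{Cor:Cinteger} and use $\sum_{k=1}^{m}k^2=\tfrac{m(m+1)(2m+1)}{6}$ in the even case and $\sum_{k=1}^{m}k(k+1)=\tfrac{m(m+1)(m+2)}{3}$ in the odd case; both reduce to $0$.) The ``if and only if'' part of Corollary \ref{Cor:Cinteger} then forces $c_1[S]=n+1$ for every $S\in\tor$, which is the desired conclusion.

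Because the argument is a one-line evaluation of a closed-form expression, there is no genuine obstacle; the only point that needs care is confirming that the closed form $M(\rho,n)=\tfrac{1}{2}n(n+1)(n+1-\rho)$ quoted in the proof of Proposition \ref{Pro:boundpi} really is $\mathcal{C}$ evaluated on the all-ones Betti vector, which is immediate once one recalls that $M(\rho,n)$ is by definition the sum $\sum_{i}A_i(\rho,n)$ of the coefficients of that linear function.
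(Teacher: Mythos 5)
Your proposal is correct and takes essentially the same approach as the paper: the paper's proof likewise observes that the $\C P^n$ Betti vector makes $\mathcal{C}(n+1,n,\textbf{b})=0$ and then invokes the ``vanishes if and only if'' clause of Corollary \ref{Cor:Cinteger}. The only difference is that you make the vanishing explicit via the coefficient sum $M(\rho,n)=\tfrac{1}{2}n(n+1)(n+1-\rho)$ from the proof of Proposition \ref{Pro:boundpi}, a computation the paper leaves implicit.
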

                \begin{proof}
                	If \(M\) has the same Betti number as \(\C P^n\), then \(\mathcal{C}(\rho=n+1 ,n, \textbf{b})=0\). Hence, Corollary \ref{Cor:Cinteger} implies that \(c_1[S]=n+1\) for all \(S \in \mathcal{S}\).
               
               \end{proof}

                \begin{lemma}\label{Lem:sumGamma}
                	
                	Let \(\ham\) be a Hamiltonian \(S^1\)-space of dimension \(2n\). Suppose that \(\ham\) admits a toric \(1\)-skeleton \(\mathcal{S}\) with equivariant pseudo-index \(\rho_M^{S^1}(\mathcal{S})=n+1\) and that \(M\) has the same Betti numbers as \(\C P^n\). For \(i=0,\dots,n\) we denote by  \(P_i\)  the unique fixed point with exactly \(i\) negative weights and we denote by  \(\Gamma_i\)  the sum of all weights at \(P_i\).\\ 
                	Under these assumptions, there exist integers \(a_0>\dots>a_n\) and \(d\), such that
                        \begin{align*}
                        \Gamma_i=(n+1)a_i+d  \quad \text{for all }i=0,\dots, n.
                        \end{align*}
                        Moreover, we have
                        \begin{align*}
                        \Gamma_i =\sum_{j=0}^{n}(a_i-a_j)  \quad \text{for all }i=0,\dots, n.
                        \end{align*}
                        
                \end{lemma}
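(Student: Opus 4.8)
The plan is to read off from the edges of the toric $1$-skeleton a system of linear relations among the $\Gamma_i$ and then solve it. The starting point is Corollary~\ref{Cor:allesn+1}, which under the present hypotheses gives $c_1[S]=n+1$ for every $S\in\mathcal{S}$. For an edge $e$ running from $P_i$ to $P_j$ with label $\operatorname{w}(e)$ (the positive weight of the $S^1$-representation on $T_{P_i}S_e$), the ABBV formula applied to the invariant $2$-sphere $S_e$ gives, exactly as in the proof of Lemma~\ref{Lem:pi},
\begin{align*}
c_1[S_e]=\frac{c_1^{S^1}(P_i)-c_1^{S^1}(P_j)}{\operatorname{w}(e)}=\frac{\Gamma_i-\Gamma_j}{\operatorname{w}(e)}.
\end{align*}
Combining the two, each edge from $P_i$ to $P_j$ yields
\begin{align*}
\Gamma_i-\Gamma_j=(n+1)\,\operatorname{w}(e),
\end{align*}
so $\Gamma_i\equiv\Gamma_j\pmod{n+1}$, and moreover $\Gamma_i>\Gamma_j$, whence $i<j$ by Lemma~\ref{Lem:Tolman}.

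Next I would propagate this edgewise congruence to all pairs, which is the crux of the argument. Since $M$ has the Betti numbers of $\C P^n$, there are exactly $n+1$ fixed points $P_0,\dots,P_n$, and the vertex $P_j$ has exactly $j$ incoming edges (one for each negative weight at $P_j$, via the bijection $f\colon W_+\to W_-$). Hence every $P_j$ with $j\geq 1$ admits an incoming edge, necessarily from some $P_i$ with $i<j$. A descending induction on $j$ then shows $\Gamma_j\equiv\Gamma_0\pmod{n+1}$ for all $j$: the base case is trivial, and in the inductive step one uses such an incoming edge together with the congruence above. Fixing an integer $d$ in this common residue class and setting $a_i:=(\Gamma_i-d)/(n+1)\in\Z$ gives $\Gamma_i=(n+1)a_i+d$; the chain of strict inequalities $\Gamma_0>\dots>\Gamma_n$ from Lemma~\ref{Lem:Tolman} forces $a_0>\dots>a_n$.

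For the second formula I would use that the total sum of all weights of the action vanishes. Indeed, Lemma~\ref{Lem:SymmetryWeights} gives $W_+=-W_-$, and since $\sum_{i=0}^{n}\Gamma_i$ is precisely this total sum,
\begin{align*}
\sum_{i=0}^{n}\Gamma_i=0.
\end{align*}
Substituting $\Gamma_i=(n+1)a_i+d$ yields $(n+1)\big(\sum_{j}a_j+d\big)=0$, hence $d=-\sum_{j=0}^{n}a_j$, and therefore
\begin{align*}
\Gamma_i=(n+1)a_i+d=(n+1)a_i-\sum_{j=0}^{n}a_j=\sum_{j=0}^{n}(a_i-a_j),
\end{align*}
as claimed.

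The only genuinely non-formal step is the propagation of the congruence, i.e.\ the connectivity of the underlying graph, and I expect this to be the main obstacle. The observation that makes it painless here is that $P_j$ carries exactly $j$ incoming edges, so following incoming edges strictly decreases the index and always terminates at the unique source $P_0$; this simultaneously establishes connectivity and organizes the induction. Everything else reduces to the two displayed identities and the ordering furnished by Lemma~\ref{Lem:Tolman}.
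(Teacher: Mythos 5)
Your proposal is correct and follows essentially the same route as the paper: both extract the relation $\Gamma_i-\Gamma_j=(n+1)\operatorname{w}(e)$ from ABBV applied to each sphere via Corollary \ref{Cor:allesn+1}, both propagate the congruence modulo $n+1$ by induction using that every $P_j$ with $j\geq 1$ has a negative weight (hence an incoming edge from some $P_i$ with $i<j$, by the ordering in Lemma \ref{Lem:Tolman}), and both obtain the second formula from $\sum_i\Gamma_i=0$ via Lemma \ref{Lem:SymmetryWeights}. The only difference is cosmetic: the paper anchors the induction by treating $P_1$ explicitly, while you start trivially at $P_0$.
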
 
                \begin{proof}(c.f.\cite[Lemma 3.19]{hattori})
                        First we show that \(n+1\) is a divisor of \(\Gamma_i-\Gamma_0\) for all \(i=1,2,\dots, n\).
                        Then we can choose \(a_0=0\) and \(d=\Gamma_0\). Hence, \(a_i\) is given by \(\frac{\Gamma_i-\Gamma_0}{n+1}\), and  \(a_0>a_1> \dots >a_n\) follows from \(\Gamma_0 >\Gamma_1 >\dots >\Gamma_n\) (see Lemma \ref{Lem:Tolman}).

                        Now we show that \(n+1\) is a divisor of \(\Gamma_i-\Gamma_0\) by induction.

                        Consider the fixed point  \(P_1\) with one negative weight.  Let  \(m\) be the negative weight at \(P_1\) and let \(S \in \mathcal{S}\) be the corresponding \(2\)-sphere in \(\mathcal{S}\), i.e. \(S^1\) acts on \(S\) with fixed points \(P_1\) and \(P_j\) and \(m\) resp. \(-m\) is the weight of the \(S^1\)-representation at \(P_1\) resp. \(P_j\), (where \(P_j\) is a fixed point of the \(S^1\)-action on \(M\)). Then the ABBV formula gives us
                        \[c_1[S]=\dfrac{c_1^{S^1}(P_1)-c_1^{S^1}(P_j)}{m}=\dfrac{\Gamma_1-\Gamma_j}{m} .\]
                        By Corollary \ref{Cor:allesn+1}, we have \(c_1[S]=n+1\).
                        So \(n+1\) is a divisor of \(\frac{1}{m}(\Gamma_1-\Gamma_j)\). Since \(m\) is negative, we have \(\Gamma_1-\Gamma_j<0\). So \(j\) must be \(0\) and \(n+1\) is a divisor of \(\Gamma_1-\Gamma_0\).\\
                        Now assume that \(n+1\) is a divisor of \(\Gamma_i-\Gamma_0\) for all \(i=1,2,\dots, j\) and \(j<n\). Consider the fixed point \(P_{j+1}\) with \(j+1\) negative weights. With the same argument  as above it follows that \(n+1\) is a divisor of \(\Gamma_{j+1}-\Gamma_i\) for some \(i=0,1,\dots,j\). Hence, \(n+1\) is a divisor of \(\Gamma_{j+1}-\Gamma_{0}\).
                        The first claim follows.\\
                        Now let us fix integers \(a_0,a_1,\dots , a_n\) and \(d\) such that \(\Gamma_i=(n+1)a_i+d\). By Lemma \ref{Lem:SymmetryWeights}, we have
                        \[0=\sum_{j=0}^{n}\Gamma_j=\sum_{j=0}^{n}\left((n+1)a_j+d \right)=(n+1)\sum_{j=0}^{n}a_j \quad +(n+1)\, d. \] It follows
                        \[d=-\sum_{j=0}^{n}a_j,\]
                        and
                        \[\Gamma_i= (n+1) \, a_i- \sum_{j=0}^{n}a_j=\sum_{j=0}^{n}(a_i-a_j).\]
                        \end{proof}
                    
                    Now we show   under the assumption of Lemma \(\ref{Lem:sumGamma}\), that the weights at the fixed point   \(P_i\) are given by \(\left\lbrace a_i-a_j\right\rbrace _{i\neq j}\).

                        The following lemma is an application of the Atiyah-Segal formula \cite{AtSe} in equivariant K-theory. A proof of this lemma in a slight different version can be found in  \cite[Proof of Lemma 3.6]{hattori}.
                        \begin{lemma}\label{Lem:Laurent}
                                Assume that  the hypotheses of Lemma \ref{Lem:sumGamma} hold. Denote by \(\omega_{i1},\dots \omega_{in}\) the weights at the fixed point \(P_i\). The         function
                                \begin{align*}
                                \varphi_i(t)=\frac{\prod_{j\neq i}\left( 1-t^{a_i-a_j}\right)  }
                                {\prod_{k=1}^{n}\left( 1-t^{\omega_{ik}}\right) }
                                \end{align*}
                                belongs to the Laurent polynomial ring \(\Z \left[ t, \, t^{-1} \right] \) for each \(i=0,\dots,n\).
                        \end{lemma}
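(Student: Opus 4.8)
The plan is to reduce the assertion to a pole-counting (regularity) condition in the variable \(t\), and then to verify that condition geometrically by examining the fixed-point sets of the finite subgroups \(\mathbb{Z}_m\subset S^1\).

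First I would note that \(\varphi_i(t)\) is a quotient of the two products \(\prod_{j\neq i}(1-t^{a_i-a_j})\) and \(\prod_k(1-t^{\omega_{ik}})\), each consisting of \(n\) binomial factors whose exponents sum to \(\Gamma_i\) by Lemma~\ref{Lem:sumGamma}; in particular \(\varphi_i\) is ``balanced'' and has no pole at \(0\) or \(\infty\) after clearing a monomial unit \(\pm t^{N}\). Once this unit is cleared the denominator becomes a product of cyclotomic polynomials, hence monic with integer coefficients, so \(\varphi_i\in\mathbb{Z}[t,t^{-1}]\) as soon as it has no pole on \(\mathbb{C}^{\ast}\): the integrality of the quotient's coefficients is then automatic from division by a monic integer polynomial. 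Since every pole sits at a root of unity, the claim is equivalent to the inequality, for all \(m\ge 2\) and a primitive \(m\)-th root of unity \(\zeta_m\),
\[
\#\{\,j\neq i : m \mid (a_i-a_j)\,\}\ \ge\ \#\{\,k : m \mid \omega_{ik}\,\},
\]
the case \(t=1\) being trivial because both orders equal \(n\) there (the \(a_j\) are distinct and the weights are nonzero).

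Second, I would interpret the right-hand side. For an \(S^1\)-invariant compatible \(J\) we have \(T_{P_i}M=\bigoplus_k \mathbb{C}_{\omega_{ik}}\), whose \(\mathbb{Z}_m\)-fixed subspace is \(\bigoplus_{m\mid\omega_{ik}}\mathbb{C}_{\omega_{ik}}\). Hence, if \(F\) is the connected component of \(M^{\mathbb{Z}_m}\) through \(P_i\), then \(F\) is a \(J\)-invariant symplectic submanifold with \(T_{P_i}F\) equal to this subspace, so \(\dim_{\mathbb{C}}F=\#\{k : m\mid\omega_{ik}\}\) is exactly the right-hand side. Restricting \(\psi\) turns \(F\) into a compact connected Hamiltonian \(S^1\)-space for the residual \(S^1/\mathbb{Z}_m\)-action, with isolated fixed points precisely the \(P_j\) contained in \(F\); by the minimal-fixed-point bound recalled in the introduction, \(\#\{j : P_j\in F\}\ge \dim_{\mathbb{C}}F+1\).

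Third, I would show that \(P_j\in F\) forces \(m\mid(a_i-a_j)\), which closes the estimate: combined with the previous bound it yields \(\#\{j\neq i : m\mid(a_i-a_j)\}\ge \#\{j\neq i : P_j\in F\}\ge \dim_{\mathbb{C}}F\). Here the toric \(1\)-skeleton enters: a sphere \(S_e\in\mathcal{S}\) of weight \(\operatorname{w}(e)\) is fixed by \(\mathbb{Z}_{\operatorname{w}(e)}\), so \(S_e\subset M^{\mathbb{Z}_m}\) whenever \(m\mid\operatorname{w}(e)\); for such an edge joining \(P_a\) to \(P_b\), Corollary~\ref{Cor:allesn+1} gives \(c_1[S_e]=n+1\), hence \(\Gamma_a-\Gamma_b=(n+1)\operatorname{w}(e)\) and \(a_a-a_b=\operatorname{w}(e)\equiv 0 \pmod m\). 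Joining \(P_i\) to \(P_j\) inside \(F\) by a chain of such edges and summing the signed differences \(a_a-a_b\) then gives \(m\mid(a_i-a_j)\). The main obstacle is precisely the connectivity input in this last step, namely that the fixed points of \(F\) are joined by edges of \(\mathcal{S}\) lying in \(F\). This is exactly the subtlety that Hattori bypasses by routing the argument through the Atiyah--Segal localization theorem in equivariant \(K\)-theory: the regularity of \(\varphi_i\) at \(\zeta_m\) is the statement that the equivariant index localizes to \(M^{\mathbb{Z}_m}\), so the pole-order inequality is built into localization at the subgroup \(\mathbb{Z}_m\) and requires no separate connectivity argument. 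I would therefore either prove the restricted-skeleton connectivity directly, or—more safely—invoke Atiyah--Segal localization at each \(\mathbb{Z}_m\) to obtain the inequality, retaining the elementary pole-counting reduction as the organizing principle.
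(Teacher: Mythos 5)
Your ``safe'' fallback is in fact exactly the paper's proof: the paper gives no self-contained argument for Lemma \ref{Lem:Laurent}, but simply states that it is an application of the Atiyah--Segal formula \cite{AtSe} and refers to the proof of \cite[Lemma 3.6]{hattori}. So that branch of your proposal coincides with the paper's route, while your primary route --- the pole-counting reduction plus the geometry of \(M^{\Z_m}\) --- is a genuinely different and more elementary argument. The reduction itself is correct: after clearing monomial units, numerator and denominator become products of cyclotomic polynomials, the denominator is monic over \(\Z\), so integrality of the quotient is automatic once \(\varphi_i\) has no poles on \(\C^*\), and regularity at a primitive \(m\)-th root of unity is precisely the inequality \(\#\{j\neq i:\ m\mid(a_i-a_j)\}\geq\#\{k:\ m\mid\omega_{ik}\}\). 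Your identification of the right-hand side with \(\dim_\C F\), where \(F\) is the connected component of \(M^{\Z_m}\) through \(P_i\), and the lower bound \(\#\{j: P_j\in F\}\geq\dim_\C F+1\) from the minimal-fixed-point count recalled in the introduction, are both sound.

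The connectivity step you flag as the main obstacle is a genuine gap as written, but it can be closed without any K-theory, so your geometric route can be completed. If \(e\) is an edge of the graph underlying the toric \(1\)-skeleton with \(m\mid\operatorname{w}(e)\) and whose sphere \(S_e\) meets \(F\), then \(S_e\) is fixed by \(\Z_{\operatorname{w}(e)}\supseteq\Z_m\), hence lies in \(M^{\Z_m}\), and being connected it lies in \(F\). Now \(\psi|_F\) is a Morse function on the compact connected manifold \(F\) whose critical points are exactly the \(P_j\in F\), each of even index (the index at \(P_j\) equals twice the number of negative weights of \(T_{P_j}M\) divisible by \(m\)); perfection of such a Morse function together with \(b_0(F)=1\) forces a unique local minimum of \(\psi|_F\). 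Every \(P_j\in F\) other than this minimum therefore has a negative weight divisible by \(m\), i.e.\ a downward edge whose sphere lies in \(F\); iterating, every fixed point of \(F\) is joined to the minimum by a chain of edges inside \(F\), so the restricted graph is connected, and your telescoping argument (using Corollary \ref{Co:possibleweights} on each edge) gives \(m\mid(a_i-a_j)\) for every \(P_j\in F\). With this paragraph added, your primary route is a complete, self-contained alternative to the paper's citation of Hattori; what the paper's (Atiyah--Segal) route buys instead is that no discussion of the skeleton inside \(M^{\Z_m}\) is needed at all, since localization at \(\Z_m\) encodes the pole-order inequality directly.
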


                        \begin{remark}\label{Rem:Hattori}
                       Consider the situation of Lemma \ref{Lem:sumGamma}. By  Hattori's result \cite[Theorem 5.7]{hattori}, we could already conclude that the weights at the fixed points \(P_i\)  are given by \(\left\lbrace a_i-a_j\right\rbrace _{j\neq i}\). However, in the setting of Theorem \ref{Thm:Main1} we can simplify the proof of Hattori. The key point for this is the following easy corollary of Lemma \ref{Lem:sumGamma}.
                       \end{remark}

                \begin{corollary}\label{Co:possibleweights}
                        Assume that the hypotheses of Lemma \ref{Lem:sumGamma} hold. Let us fix integers
                        \(a_0>\dots >a_n,\) such that
                        \begin{align*}
                        \Gamma_i=\sum_{j=0}^{n}(a_i-a_j) \quad \text{for all } i=0,\dots,n.
                        \end{align*}
                        If \(m\) is a weight at the fixed point \(P_i\), then \(m=a_i-a_k\) for some \(k\neq i\)\ and \(-m=a_k-a_i\) is a weight at the fixed point \(P_k\).
                \end{corollary}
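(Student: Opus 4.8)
The plan is to exploit the fact that every weight at a fixed point is realised geometrically by one of the spheres of the toric $1$-skeleton, and then to read off the weight from the value $c_1[S]=n+1$ that is forced by Corollary \ref{Cor:allesn+1}. The computation is immediate once the right sphere is produced; the only delicate point is the bookkeeping with signs at the very first step.

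First I would fix a weight $m$ at $P_i$ and exhibit a sphere $S\in\mathcal{S}$ on which $S^1$ acts with fixed points $P_i$ and some other fixed point $P_k$, such that $m$ is the weight on $T_{P_i}S$ and $-m$ the weight on $T_{P_k}S$. If $m>0$, then $m\in W_+$, so by definition of the graph $\Gamma=(V,E)$ there is an edge starting at $P_i$ labelled by $m$, and the associated sphere does the job. If $m<0$, then $m\in W_-$, and by the bijection $f\colon W_+\to W_-$ of Lemma \ref{Lem:SymmetryWeights} there is an edge \emph{ending} at $P_i$ whose label is $-m>0$; the associated sphere then has weight $-m$ at its source and $m$ at $P_i$, which is again what I want.

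Next I would apply the ABBV localization formula, exactly as in the proofs of Lemma \ref{Lem:pi} and Lemma \ref{Lem:sumGamma}. Since $S$ carries weights $m$ and $-m$ at its two fixed points,
\begin{align*}
c_1[S]=\frac{c_1^{S^1}(P_i)-c_1^{S^1}(P_k)}{m}=\frac{\Gamma_i-\Gamma_k}{m}.
\end{align*}
By Corollary \ref{Cor:allesn+1} the left-hand side equals $n+1$, so $\Gamma_i-\Gamma_k=(n+1)m$. On the other hand Lemma \ref{Lem:sumGamma} gives $\Gamma_i-\Gamma_k=(n+1)(a_i-a_k)$, and comparing the two expressions yields $m=a_i-a_k$. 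Finally $-m=a_k-a_i$ is, by the construction of $S$, the weight of the $S^1$-representation at $P_k$, hence a weight at $P_k$; and since weights are non-zero we have $a_i\neq a_k$, so $k\neq i$.

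The arithmetic here is trivial, so I do not expect a genuine obstacle. The one step that requires care is the construction of the sphere in the case $m<0$: by definition the edges of $\Gamma$ are indexed only by the \emph{positive} weights, so a negative weight at $P_i$ is realised not as the label of an outgoing edge but as the $(-m)$-labelled endpoint of an incoming edge. This is precisely where the symmetry $W_+=-W_-$ and the bijection $f$ of Lemma \ref{Lem:SymmetryWeights} are needed, and keeping the sign conventions consistent with the ABBV expression $c_1[S]=(\Gamma_i-\Gamma_k)/m$ is the only thing to watch.
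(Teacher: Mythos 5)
Your proof is correct and follows essentially the same route as the paper: realise the weight $m$ by a sphere $S$ of the toric $1$-skeleton, apply ABBV together with Corollary \ref{Cor:allesn+1} to get $\Gamma_i-\Gamma_k=(n+1)m$, and compare with $\Gamma_i-\Gamma_k=(n+1)(a_i-a_k)$. The only difference is that you spell out the sign bookkeeping (incoming versus outgoing edge when $m<0$), which the paper's proof passes over by simply invoking ``the corresponding $2$-sphere''; this is a welcome clarification but not a different argument.
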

                \begin{proof}
                        Let \(P_i\) be a fixed point and \(m\) be a weight at \(P_i\). Consider the corresponding \(2\)-sphere \(S \in\mathcal{S}\). \(P_i\) is a fixed point of the \(S^1\)-action on \(S\). Let \(P_k\) be the second fixed point of the \(S^1\)-action on    \(S\), so \(-m\) is a weight at \(P_k\). By Corollary \ref{Cor:allesn+1}, we have \(c_1[S]=n+1\). From  the ABBV formula it follows
                        \[c_1[S]=\frac{1}{m}(\Gamma_i-\Gamma_k),\] but
                        \[\Gamma_i-\Gamma_k=\sum_{j=0}^{n}(a_i-a_j)-\sum_{j=0}^{n}(a_k-a_j)=(n+1)(a_i-a_k).\]
                        Hence,  \(n+1=(n+1)\dfrac{a_i-a_k}{m}\), which implies \(m=a_i-a_k.\)
                \end{proof}
                
Now we can prove  Theorem \ref{Thm:Main1}.
                
                \begin{proof}[Proof of Theorem \ref{Thm:Main1}]
                	
                	Let \(\ham\) be a Hamiltonian \(S^1\)-space of dimension \(2n\) which admits a toric \(1\)-skeleton \(\mathcal{S}\) with equivariant pseudo-index \(\rho_M^{S^1}(\mathcal{S})=n+1\). We assume that \(n\leq 5\) or that the vector of even Betti numbers of \(M\) is unimodal.\\
                	By Corollary \ref{Cor:Bettino}, \(M\) has the same Betti numbers as \(\C P^n\). So for \(i=0,\dots,n\) we denote by  \(P_i\)  the unique fixed point with exactly \(i\) negative weights and we denote  by  \(\Gamma_i\)  the sum of all weights at \(P_i\).\\ 
                	By Lemma \ref{Lem:sumGamma}, there exist integers \(a_0>\dots>a_n\), such that
                        \(\Gamma_i=\sum_{j=0}^{n}(a_i-a_j)\). Moreover, if \(a_i-a_j\) is a weight at \(P_i\), then \(a_j-a_i\) is a weight at \(P_j\) by Corollary \ref{Co:possibleweights}.\\
                        Now we prove by induction over the fixed points, that the set of weights at the fixed point \(P_i\) coincides with \(\left\lbrace a_i-a_j \right\rbrace_ {j \neq i}\), i.e. the \(S^1\)-representations at the fixed points are the same as those of a standard \(S^1\)-action on \(\C P^n\). \\
                        Consider the situation at the minimum \(P_0\) of \(\psi\). All weights \(\omega_{0,1}, \dots ,\omega_{0,n}\) at \(P_0\) are positive and  \(a_0-a_j\) is positive for \(j\neq 0\). Hence,
                        \begin{align}\label{Eq:1}
                        \varphi_0(t)=\frac{\prod_{j\neq 0}\left( 1-t^{a_0-a_j}\right)  }
                        {\prod_{k=1}^{n}\left( 1-t^{\omega_{0k}}\right) }
                        \end{align}
                        is a rational function with \(\varphi_0(0)=1\). By Lemma  \ref{Lem:Laurent} \(\varphi_0\) is also a Laurent polynomial. Thus, \(\varphi_0\) is an ordinary polynomial.  Since the numerator and denominator of \(\varphi_0(t)\)  are both polynomials of degree \(\sum_{j=1}^{n}(a_0-a_j)=\Gamma_0=\sum_{k=1}^{n}\omega_{0k}\), we must have that \(\varphi_0\) is constant. In particular, the numerator and denominator  of \(\varphi_0(t)\) have the same roots in \(\C\). This implies
                        \begin{align*}
                        \left\lbrace \omega_{01},\dots, \omega_{0n}\right\rbrace =\left\lbrace a_0-a_j\right\rbrace _{j\neq 0}.
                        \end{align*}\\
                        Now assume that the claim holds for the fixed points \(P_0,\dots,P_{i-1}\). By Corollary  \ref{Co:possibleweights} it follows that the negative weights at \(P_i\) are given by \( a_i-a_0,\, a_i-a_1,\dots, a_i-a_{i-1}\). So let \(\omega_{i1},\dots,\omega_{in}\) be the weights at \(P_i\). Without loss of generality we can assume that \(\omega_{ik}=a_i-a_{k-1}\) for \(k=1,\dots,i\). Hence, \(\omega_{i(i+1)},\dots,\omega_{in}\) are the positive weights at \(P_i\). Moreover, we have
                        \begin{align}\label{summe}
                        \sum_{j=i+1}^{n}(a_i-a_j)=\sum_{j=i+1}^{n}\omega_{ij}
                        \end{align}
                        and
                        \begin{align}
                        \varphi_i(t)=\frac{\prod_{j\neq i}\left( 1-t^{a_i-a_j}\right)  }
                        {\prod_{k=1}^{n}\left(  1-t^{\omega_{ik}}\right)  }=\frac{\prod_{j\geq i+1}\left( 1-t^{a_i-a_j} \right) }
                        {\prod_{k\geq i+1}\left( 1-t^{\omega_{ik}}\right) }.
                        \end{align}
                        Since \(a_i-a_j>0\)  and \(\omega_{ij}>0\) for \(j\geq i+1\), we have \(\varphi_i(0)=1\). Moreover, by Lemma \ref{Lem:Laurent}, \(\varphi_i\) is a Laurent polynomial. So \eqref{summe} and \(\varphi_i(0)=1\) imply that \(\varphi_i\) is constant.
                        Hence, we have
                        \[\left\lbrace a_i-a_j\right\rbrace _{j\geq i+1}=\left\lbrace \omega_{ij}\right\rbrace_{j\geq i+1} \]
                        and the claim follows.
                \end{proof}

        \end{section}

\begin{section}{The Homotopy Type: Proof of Theorem 1.7}
In this section we prove  Theorem \ref{Thm:Main2}. Given a Hamiltonian \(S^1\)-space \(\ham\), which satisfies the conditions of Theorem \ref{Thm:Main2}, Theorem \ref{Thm:Main1} implies that the moment map \(\psi:M\rightarrow\R\) is  a perfect Morse function with exactly one critical point of index \(0,2,\dots2n\). So  Morse Theory (see \cite{Milnor}) implies that \(M\) is homotopy equivalent to a CW-complex with exactly one cell in  dimension \(0,2,\dots,2n\). Moreover, \(M\) has the same integer cohomology ring as \(\C P^n\) by Corollary \ref{Cor:Ring}. \\

 Thus, in order to prove Theorem \ref{Thm:Main2},  we need to show that a CW-complex with exactly one cell in dimension \(0,2,\dots, 2n\) and no cells of other dimensions, which has the same integer cohomology ring as \(\C P^n\), is also homotopy equivalent to \(\C  P^n\). This is the content of Theorem \ref{Thm:HomotopyEquivalence}, which concludes this section.\\

Let \(f:X\rightarrow Y\) be a continuous map. If \(f\) is a homotopy equivalence, then \(f\) induces isomorphisms in all homotopy groups. The converse is true if \(X\) and \(Y\) are CW-complexes. This is known as the  Whitehead Theorem. Combining this with the Hurewicz Theorem yields a useful corollary:
\begin{corollary}\label{Cor:WH}
        Let \(X\) and \(Y\) be simply connected CW-complexes and let \(f: X\rightarrow Y\) be a continuous map, such that
        \begin{align*}
        f_* : H_i(X;\Z) \longrightarrow  H_i(Y;\Z)
        \end{align*}
        is an isomorphism for all \(i\). Then \(f\) is a  homotopy equivalence.
\end{corollary}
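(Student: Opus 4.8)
The plan is to reduce the statement to a question about homotopy groups and then apply the Whitehead theorem. Since $X$ and $Y$ are CW-complexes, by the Whitehead theorem it suffices to show that the induced maps $f_*\colon \pi_i(X)\to \pi_i(Y)$ are isomorphisms for all $i$; the hypothesis only gives isomorphisms on homology, so the real work is transferring this information from homology to homotopy.

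First I would replace $f$ by a cofibration. Let $M_f$ be the mapping cylinder of $f$; then the inclusion $X\hookrightarrow M_f$ is a cofibration and the collapse $M_f\to Y$ is a homotopy equivalence, so it suffices to study the CW pair $(M_f,X)$. Because $f_*$ is an isomorphism on $H_i(\,\cdot\,;\Z)$ for every $i$ and $M_f\simeq Y$, the long exact homology sequence of the pair $(M_f,X)$ collapses and yields $H_i(M_f,X;\Z)=0$ for all $i$.

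Next I would invoke the relative Hurewicz theorem. As $X$ and $Y$ are simply connected, so is $M_f$, and the pair $(M_f,X)$ is $1$-connected. The relative Hurewicz theorem then says that if $\pi_i(M_f,X)=0$ for all $i<n$, the Hurewicz map $\pi_n(M_f,X)\to H_n(M_f,X;\Z)$ is an isomorphism. Running this as an induction on $n$, starting from the $1$-connectivity of the pair and feeding in the vanishing $H_n(M_f,X;\Z)=0$ at each stage, I would conclude that $\pi_n(M_f,X)=0$ for every $n$. The long exact homotopy sequence of the pair then forces $\pi_i(X)\to\pi_i(M_f)\cong\pi_i(Y)$ to be an isomorphism for all $i$, i.e. $f_*$ is an isomorphism on every homotopy group, and the Whitehead theorem completes the proof.

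The hard part will be verifying the hypotheses of the relative Hurewicz theorem rather than any computation: one must check that the pair $(M_f,X)$ is genuinely simply connected, so that there is no $\pi_1$-action obstructing the Hurewicz isomorphism, and one must set up the induction so that total vanishing of the relative homology propagates to total vanishing of the relative homotopy. The mapping-cylinder replacement is the key technical device here, since it converts the arbitrary map $f$ into an inclusion and thereby legitimizes passing to the homotopy and homology long exact sequences of a pair.
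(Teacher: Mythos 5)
Your proof is correct and is essentially the argument the paper relies on: the paper derives this corollary by ``combining the Whitehead Theorem with the Hurewicz Theorem'' and cites Hatcher for the details, and your mapping-cylinder reduction followed by the relative Hurewicz induction and the long exact sequences is exactly that standard proof written out in full. No gaps; the only implicit point is that the relative Hurewicz theorem applies to the pair \((M_f,X)\) without needing a CW structure on \(M_f\), which is indeed the case since \(X\) is simply connected.
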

A proof of the Whitehead Theorem and of Corollary \ref{Cor:WH} can be found in \cite{hatcher}.\\
Moreover, let \(X\) be a CW-complex and \(Y\) a connected space. If the  \(k\)-th homotopy group \(\pi_k(Y)\) of \(Y\) is trivial, then any continuous map \(X^k\rightarrow Y\) extends to a continuous map \(X^{k+1}\rightarrow Y\), where \(X^k\) resp. \(X^{k+1}\) is the \(k\)- resp. \((k+1)\)-skeleton of \(X\)  (see \cite[Lemma 4.7]{hatcher}).
\begin{theorem}\label{Thm:HomotopyEquivalence}
                Let \(X\) be a CW complex with exactly one cell in dimension \(0,2,4,\dots, 2n\) and no cells of other dimensions. If \(X\) has the same integer cohomology ring as \(\C P^n\), then the spaces are homotopy equivalent.
        \end{theorem}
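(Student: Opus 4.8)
The plan is to produce a map \(f\colon X \to \C P^n\) that induces an isomorphism on every integral homology group, and then to invoke Corollary \ref{Cor:WH}. Both spaces are simply connected CW complexes (\(X\) has no \(1\)-cells, so \(\pi_1(X)=0\); and \(\C P^n\) is simply connected), so by that corollary a homology isomorphism between them is automatically a homotopy equivalence. I would construct \(f\) by induction over the skeleta of \(X\), using the extension principle recalled above: if \(\pi_k(Y)=0\), then any map defined on the \(k\)-skeleton extends over the \((k+1)\)-skeleton (\cite[Lemma 4.7]{hatcher}).

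For the base of the induction, note that \(X^2 = S^2\) (one \(0\)-cell and one \(2\)-cell, no \(1\)-cells), and I would choose a map \(f_2\colon X^2=S^2 \to \C P^n\) representing a generator of \(\pi_2(\C P^n)\cong\Z\); by the Hurewicz isomorphism in degree \(2\), this makes \((f_2)_*\) an isomorphism on \(H_2\). For the inductive step, passing from \(X^{2k}=X^{2k+1}\) to \(X^{2k+2}\) attaches only cells of dimension \(2k+2\), and by the extension principle the relevant obstruction vanishes provided \(\pi_{2k+1}(\C P^n)=0\). Here I would use the Hopf fibration \(S^1\hookrightarrow S^{2n+1}\to\C P^n\): its long exact sequence of homotopy groups yields \(\pi_i(\C P^n)\cong\pi_i(S^{2n+1})=0\) for \(3\le i\le 2n\). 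In particular \(\pi_{2k+1}(\C P^n)=0\) for \(1\le k\le n-1\), so the map extends successively over the \(4\)-, \(6\)-, \dots, \(2n\)-cells, producing \(f\colon X=X^{2n}\to\C P^n\).

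It then remains to verify that \(f\) is a homology isomorphism, and this is exactly where the hypothesis on the cohomology \emph{ring} enters. Since \((f_2)_*\) is an isomorphism on \(H_2\), the map \(f^*\) sends a generator \(\beta\) of \(H^2(\C P^n)\) to a generator \(\alpha:=f^*\beta\) of \(H^2(X)\). Because \(f^*\) is a ring homomorphism, \(f^*(\beta^k)=\alpha^k\), and by the assumption that \(X\) has the integral cohomology ring of \(\C P^n\), the class \(\alpha^k\) generates \(H^{2k}(X)\cong\Z\) for each \(k=1,\dots,n\). Hence \(f^*\) is an isomorphism in every degree (trivially so in odd degrees, where both groups vanish). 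As all the (co)homology groups involved are free, the universal coefficient theorem converts this into the statement that \(f_*\colon H_i(X;\Z)\to H_i(\C P^n;\Z)\) is an isomorphism for all \(i\), and Corollary \ref{Cor:WH} finishes the proof.

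The main obstacle I anticipate is twofold. The genuinely topological input is the vanishing of the intermediate homotopy groups \(\pi_{2k+1}(\C P^n)\), without which the skeleton-by-skeleton extension could fail; this rests on the Hopf fibration computation above. The second, conceptually indispensable, point is that the ring hypothesis is precisely what upgrades ``\(f_*\) is an isomorphism on \(H_2\)'' to ``on all \(H_i\)'': a CW complex with a single cell in each even dimension has the additive cohomology of \(\C P^n\) regardless of its cup-product structure, yet need not be homotopy equivalent to \(\C P^n\) if the products vanish, so multiplicativity cannot be omitted.
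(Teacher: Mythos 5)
Your proposal is correct and follows essentially the same route as the paper's proof: both extend a degree-one map $X^2 \to \C P^n$ over all of $X$ using the vanishing of the intermediate homotopy groups of $\C P^n$, use the cup-product hypothesis to promote the isomorphism on $H^2$ to an isomorphism of full cohomology rings, and conclude via Corollary \ref{Cor:WH}. The only cosmetic differences are that you justify the vanishing of the odd homotopy groups explicitly via the Hopf fibration and define the map on $X^2$ as a generator of $\pi_2(\C P^n)$, whereas the paper takes the composite of a homotopy equivalence $X^2 \to \C P^1$ with the inclusion $\C P^1 \hookrightarrow \C P^n$.
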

        \begin{proof}
                Consider the \(2\)-skeleton \(X^2\) of \(X\). It  is a CW-complex with just one \(0\)-cell and one \(2\)-cell, hence it is homeomorphic to the the \(2\)-sphere \(S^2\). Let
                \[\varphi: X^2 \rightarrow \C P^1 \cong S^2\]
                be a homotopy equivalence and
                \[i: \C P^1 \rightarrow \C P^n \]
                the inclusion\footnote{We identify \(\C P^1\) with the subset of \(\C P^n\)  consisting of the points of the form \([z_0,z_1,0,\dots,0]\).}. Since the homotopy groups \(\pi_k(\C P^n)\) are trivial for \(k=3,4,\dots,2n\), the map \(i\circ \varphi\colon X^2 \rightarrow \C P^n\) admits an extension \(f: X \rightarrow \C P^n\). The diagram

                \begin{center}
                        \begin{tikzpicture}[->,>=stealth',shorten >=1pt,auto,node distance=1.0cm,
                        thick,main node/.style={circle,draw,font=\sffamily\Large\bfseries}]
                        \node (E) at (0,0) {$X^2$};
                        \node[right=of E] (F) {$\C P^1$};
                        \node[below=of F] (N) {$\C P^n$};
                        \node[below=of E] (M) {$X$};
                        \draw[->] (E)--(F) node [midway,above] {$\varphi$};
                        \draw[->] (F)--(N) node [midway,right] {$i$};
                        \draw[->] (M)--(N) node [midway,below] {$f$};
                        \draw[->] (E)--(M) node [midway,left] {$j$};
                        \end{tikzpicture}

                \end{center}
                commutes, where \(j: X^2 \rightarrow X \) is the inclusion. We obtain a commutative diagram

                \begin{center}
                        \begin{tikzpicture}[->,>=stealth',shorten >=1pt,auto,node distance=1.0cm,
                        thick,main node/.style={circle,draw,font=\sffamily\Large\bfseries}]
                        \node (E) at (0,0) {$H^2(\C P^n; \Z)$};
                        \node[right=of E] (F) {$H^2(X; \Z)$};
                        \node[below=of F] (N) {$H^2(X^2; \Z)$};
                        \node[below=of E] (M) {$H^2(\C P^1; \Z)$};
                        \draw[->] (E)--(F) node [midway,above] {$f^*$};
                        \draw[->] (F)--(N) node [midway,right] {$j^*$};
                        \draw[->] (M)--(N) node [midway,below] {$\varphi^*$};
                        \draw[->] (E)--(M) node [midway,left] {$i^*$};
                        \end{tikzpicture}

                \end{center}
                in cohomology groups. Since \(X\) and \(\C P^n\) have the structure of a CW-complex with no cells of dimension \(3\), the maps \(i^*\) and \(j^*\) are group isomorphisms in the second cohomology groups. Of course \(\varphi^* \) is a group isomorphism, since \(\varphi\) is a homotopy equivalence. Therefore \(f^*: H^2(\C P^n ;\Z)\rightarrow H^2(X;\Z)\) must be a group isomorphism. But \(H^*(\C P^n; \Z)\) and \(H^*(X; \Z)\) are both isomorphic to \(\Z[x]\slash \left\langle x^{n+1}\right\rangle \) as graded rings, where \(x\) has degree \(2\). Thus,  \(f^*: H^*(\C P^n ;\Z)\rightarrow H^*(X;\Z)\) must be a ring isomorphism. We conclude, that \(f\) induces group isomorphisms  \(f_*: H_k(X ;\Z)\rightarrow H_k(\C P^n;\Z)\) for all \(k\in \N_0\). Hence, \(f\) is a homotopy equivalence.

        \end{proof}

\end{section}

\textsc{Mathematisches Institut, Universität zu Köln, Weyertal 86-90, D-50931, Köln, Germany}\\
E-mail address: \textbf{icharton@math.uni-koeln.de}


\begin{thebibliography}{9}




        \bibitem{AB} Atiyah, M.F., and  Bott, R.,
        \emph{The moment map and equivariant cohomology},
        Topology \textbf{23} (1984), 1\(-\)28.
        


        \bibitem{AtSe}
        Atiyah, M.F., and  Segal, G.B.,
        \emph{The index of elliptic operators,II,}
        Ann. of Math. \textbf{87} (1968), 531\(-\)545.


        \bibitem{BV}
        Berline, N., and  Vergne, M.,
        \emph{Classes caract\'eristiques \'equivariantes. Formule de localisation en cohomologie \'equivariante,}
        C.R. Acad. Sci. Paris S\'er. I Math. \textbf{295} (1982), 539\(-\)541.




\bibitem{pseudo}
Cho, K., Miyaoka, Y., Shepherd-Barron, N.I.
\emph{Characterizations of projective space and applications to complex symplectic manifolds,}
Higher dimensional birational geometry  Adv. Stud. Pure Math. vol. \textbf{35}, Math. Soc. Japan, Tokyo, (2002), 1-88


\bibitem{Cho}Cho, Y., 
\emph{Unimodality of the Betti numbers for Hamiltonian circle actions with index-increasing
	moment maps,} 
Internat. J. Math. \textbf{27} No.5 (2016)





        \bibitem{new}
        Godinho, L., and Sabatini, S.,
        \emph{New tools for classifying Hamiltonian circle actions with isolated fixed points,}
        Found. Comput. Math. \textbf{14} (2014), 791\(-\)860.

        \bibitem{1224}
        Godinho, L., Sabatini, S,. and von Heymann, F.,
        \emph{12, 24 and Beyond}  Advances in Mathematics,  \textbf{319} (2017), 472 - 521.





        \bibitem{supersymme} Guillemin V. and S. Sternberg,
        \emph{Supersymmetry and equivariant de Rham theory,}
        Mathematics Past and Present. Springer-Verlag, Berlin, 1999.






        \bibitem{hatcher}
        Hatcher A.,
        \emph{Algebraic Topology }, Cambridge University Press,  2002.






        \bibitem{hattori}
        Hattori, A.,
        \emph{\(S^1\)-actions on unitary manifolds and quasi-ample line bundles,}
        J. Fac. Sci. Univ. Tokyo Sect. IA Math. \textbf{31} (1985), 433\(-\)486.

        
        






        \bibitem{12}
        Karshon, Y.,
        \emph{Periodic Hamiltonian flows on four-dimensional manifolds,}
        Mem. Amer. Math. Soc. \text{141} (1999).

        \bibitem{Kirwan}
        Kirwan, F.,
        \emph{Cohomology of quotients in symplectic and algebraic geometry,}
        Mathematical Notes \text{31}, Princeton University Press, Princeton, NJ, 1984.



        \bibitem{Milnor}
        Milnor, J., \emph{Morse Theory,} based on lecture notes by M. Spivak and R. Wells,
        Annals of Mathematics Studies 51, Princeton University Press, Princeton, 1963



\bibitem{Mori}
Mori, S., \emph{Projective manifolds with ample tangent bundles,} Ann. of Math. \textbf{110} (1979), 593-606.



        \bibitem{sabatinichern}
        Sabatini, S.,
        \emph{On the Chern numbers and the Hilbert polynominal of an almost complex manifold with a circle action,} Communication in Contemporary Mathematics \textbf{19} No.4 (2017) 


        \bibitem{TolmanPetri} Tolman, S.,
       \emph{On a symplectic generalization of Petrie's conjecture,}
        Trans. Amer. Math. Soc. \textbf{362} (2010),3963-3996.






        
        \bibitem{wall}  Wall C.T.C, \emph{Classification problems in differential topology. V. On certain 6-manifolds,} Invent. Math. \textbf{1} (1966), 355-374; 



\end{thebibliography}
\end{document}